\newcommand{\norm}[1]{\left\lVert#1\right\rVert}
\newcommand{\comentario}[1]{\color{blue}#1\color{black}}
\newtheorem{theorem}{Theorem}
\newtheorem{prop}[theorem]{Proposition}
\newtheorem{lemma}[theorem]{Lemma}
\newtheorem{coro}[theorem]{Corollary}
\newtheorem{prob}{Problem}
\title{On prescribing total preorders and linear orders to pairwise distances of points in Euclidean space}
\author{Víctor Hugo Almendra-Hernández\\Leonardo Martínez-Sandoval}
\date{}
\begin{document}

\maketitle

\begin{abstract}
    We show that any total preorder on a set with $\binom{n}{2}$ elements coincides with the order on pairwise distances of some point collection of size $n$ in $\mathbb{R}^{n-1}$. For linear orders, a collection of $n$ points in $\mathbb{R}^{n-2}$ suffices. These bounds turn out to be optimal. We also find an optimal bound in a bipartite version for total preorders and a near-optimal bound for a bipartite version for linear orders. Our arguments include tools from convexity and positive semidefinite quadratic forms.
\end{abstract}


\section{Introduction}

The study of distances among points in Euclidean space is a topic in discrete geometry that has stimulated a lot of research and has interconnected several areas of mathematics. Two striking examples are Erd\H{o}s problems on unit distances  and distinct distances \cite{erdos1946sets}.




In this note, we study the following combinatorial problem on Eucliean distances. For a positive integer $n$, we define $[n]=\{1,2\ldots,n\}$. Let $P$ be a collection of points $p_i$ for $i\in [n]$ in $d$-dimensional Euclidean space in general position.

Under these assumptions, the point collection $P$ induces a total preorder on the family of pairs $$D_n= \binom{[n]}{2}=\{(i,j):1\leq i < j \leq n\},$$ given by $(i_1,i_2)\leq (j_1,j_2)$ if and only if $\norm{p_{i_1}-p_{i_2}}\leq \norm{p_{j_1}-p_{j_2}}$. When $P$ induces pairwise distinct distances, this preorder is also antisymmetric and thus it is a linear order on $D_n$.

Is every total preorder on $D_n$ obtainable in this way? What about linear orders?

We will show that every given total preorder or linear order is achievable if and only if $d$ is large enough with respect to $n$. Our main result is an optimal bound on the minimal dimension required for this to happen.

\begin{theorem}
\label{thm:main}
  Let $n\geq 3$ be an integer.
  \begin{itemize}
      \item The minimal dimension into which any linear order on $D_n$ can be induced by the pairwise distances of a point collection in $\mathbb{R}^d$ is $d=n-2$.
      \item The minimal dimension into which any total preorder on $D_n$ can be induced by the pairwise distances of a point collection in $\mathbb{R}^d$ is $d=n-1$.
  \end{itemize}
\end{theorem}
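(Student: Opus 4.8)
The plan is to split each bullet into an upper bound (a construction realizing any prescribed order in the claimed dimension) and a lower bound (a specific order that cannot be realized one dimension below).

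For the upper bounds, I would argue by a dimension-counting / perturbation scheme combined with a distance-geometry embedding. Start with the total preorder case. Given a total preorder on $D_n$, list the equivalence classes in increasing order and assign to each class a target squared-distance value $t_1 < t_2 < \cdots < t_k$ (one can keep these as free parameters). The question of whether there exist points $p_1,\dots,p_n$ in some $\mathbb{R}^d$ with $\norm{p_i-p_j}^2$ equal to the prescribed value for each pair is governed by the classical fact that a symmetric matrix of squared distances $(D_{ij})$ is realizable in $\mathbb{R}^d$ if and only if the associated Gram matrix $G_{ij} = \tfrac12(D_{1i}+D_{1j}-D_{ij})$ (anchored at $p_1$) is positive semidefinite of rank at most $d$. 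So I would pick the $t_\ell$ greedily — e.g. take them growing fast enough, or take a generic point configuration of $n$ points in $\mathbb{R}^{n-1}$ (which has $n-1$ "degrees of freedom per point" and whose $\binom n2$ squared distances can be shown to be algebraically independent, hence can be continuously steered to respect any prescribed strict inequalities among them), and then collapse to the preorder by letting equal classes coincide. The cleanest route: show the map from configurations in $\mathbb{R}^{n-1}$ (modulo isometry) to the vector of $\binom n2$ squared distances has full-dimensional image, so every open "order cell" $\{t_{\sigma(1)} < \cdots < t_{\sigma(m)}\}$ is hit; then a limiting/degeneracy argument lets equalities appear, giving any total preorder. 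For the linear-order bullet one needs the same but in $\mathbb{R}^{n-2}$; here the Gram matrix has rank $\le n-2$, i.e. one linear constraint, and the point is that a generic such configuration still produces $\binom n2$ pairwise distinct distances whose order can be prescribed arbitrarily — this is where the convexity tools hinted at in the abstract presumably enter, to control the image of the distance map on the codimension-one stratum.

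For the lower bounds I would exhibit explicit "bad" orders. For the total preorder statement, the natural candidate is the order coming from the regular simplex: $n$ points all at mutual distance $1$, i.e. the preorder with a single equivalence class. Realizing $n$ equidistant points forces them to be affinely independent, hence requires $d \ge n-1$; so this single-class preorder already certifies optimality of $d=n-1$. For the linear-order statement one cannot use exact equalities, so instead I would take a linear order that is a small generic perturbation of the all-equal preorder — say, order the pairs so that the configuration is forced to be an arbitrarily small perturbation of the regular simplex. The claim to prove is that any configuration inducing this particular linear order must be close enough to a regular simplex that its Gram matrix has rank $\ge n-2$; equivalently, that rank $\le n-3$ (configurations in $\mathbb{R}^{n-3}$) imposes enough rigidity on the distance vector to violate the prescribed order. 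Concretely I expect to use that a rank-$r$ Gram matrix lies on an algebraic variety of Gram matrices whose defining relations (vanishing of $(r+1)\times(r+1)$ minors) translate into equations among the squared distances; a suitably chosen strict-inequality pattern among the $\binom n2$ distances will be inconsistent with all these minor equations, so no low-dimensional realization exists. The main obstacle is precisely this lower-bound analysis for linear orders: unlike the preorder case, where a rigid equidistant configuration does the job immediately, here one must produce a strict order pattern that is provably incompatible with every configuration in $\mathbb{R}^{n-3}$, which requires understanding the combinatorics of distance patterns achievable on the rank-$(n-3)$ stratum — this is the quantitative heart of the theorem and the place where the positive-semidefinite and convexity machinery will carry the real weight.
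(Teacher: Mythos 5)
Your proposal gets one piece right, and in fact more simply than the paper: for the preorder lower bound, the single-class preorder (all $\binom n2$ pairs equivalent) forces $n$ pairwise equidistant, hence affinely independent, points, so $d\ge n-1$; the paper instead uses the preorder with $(n-1,n)$ as unique minimum and all other pairs in one maximal class, ruled out by a simplex-height computation, but your example works. You have also identified the correct tool for the upper bounds (the Schoenberg/Gram-matrix criterion). The other three quarters of the theorem, however, have genuine gaps. For the upper bound in $\mathbb{R}^{n-1}$, ``the image is full-dimensional, so every open order cell is hit'' is a non sequitur (a full-dimensional set need not meet every cell of the order arrangement), and ``take the $t_\ell$ growing fast enough'' can simply violate positive semidefiniteness; the repair, which is the paper's argument, is to set all distances equal to $1+k\epsilon$ according to the class index $k$, observe that as $\epsilon\to 0$ the Gram matrix tends to the equilateral one, which is positive \emph{definite}, and invoke openness of positive definiteness --- this realizes any preorder outright, with no limiting or degeneration step.

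The two remaining parts are where the real ideas are missing. For linear orders in $\mathbb{R}^{n-2}$ you offer no mechanism at all for getting rank $n-2$: saying that a generic configuration on the codimension-one stratum ``still produces distances whose order can be prescribed arbitrarily'' is a restatement of the claim, not an argument. The paper's device is to refuse to prescribe the smallest distance: assume the minimal pair is $(n-1,n)$, assign perturbed distances $1+k\epsilon$ to all other pairs, build \emph{two} $(n-1)$-point configurations in $\mathbb{R}^{n-2}$ (one omitting $p_n$, one omitting $p_{n-1}$) via the Gram-matrix criterion, glue them along their common $n-2$ points by an isometry with $p_{n-1}$ and $p_n$ on the same side of the spanned hyperplane, and note that as $\epsilon\to 0$ both configurations approach the unit regular simplex, so $\norm{p_{n-1}-p_n}<1$ automatically becomes the unique minimum. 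For the linear-order lower bound in $\mathbb{R}^{n-3}$, your plan cannot work as stated: a linear order carries no metric or scale information, so no strict-inequality pattern can force a configuration to be ``an arbitrarily small perturbation of the regular simplex,'' and the vanishing-minor scheme is left entirely unexecuted. The paper instead prescribes a concrete order --- all pairs inside $[n-3]$ largest, pairs inside $\{n-2,n-1,n\}$ next, all cross pairs smallest --- and uses a convexity argument (perpendicular-bisector halfspaces plus a lemma that such halfspaces cut out a subsimplex of the simplex spanned by the points) to show that two of the last three points would both have to lie in, and in fact coincide inside, the same simplex, a contradiction. That construction-plus-convexity argument is the quantitative heart of the first bullet, and it is absent from your proposal.
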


\definecolor{ududff}{rgb}{0.30196078431372547,0.30196078431372547,1}
\begin{figure}[h!]
\begin{center}
\begin{tikzpicture}[line cap=round,line join=round,>=triangle 45,x=0.3cm,y=0.3cm]
\clip(-8,-7.5) rectangle (28,8.5);
\draw [line width=1pt] (5.86374,6.99236)-- (0,0);
\draw [line width=1pt] (0,0)-- (4.45337,-2.8131);
\draw [line width=1pt] (4.45337,-2.8131)-- (18.7586,0.47777);
\draw [line width=1pt] (18.7586,0.47777)-- (5.86374,6.99236);
\draw [line width=1pt] (5.86374,6.99236)-- (4.45337,-2.8131);
\draw [line width=1pt] (0,0)-- (18.7586,0.47777);
\begin{scriptsize}
\draw [fill=ududff] (0,0) circle (1.5pt);
\draw[color=ududff] (-0.8,-0.0072408803097450125) node {$P_{1}$};
\draw [fill=ududff] (4.45337,-2.8131) circle (1.5pt);
\draw[color=ududff] (4.346608530553978,-3.8) node {$P_{2}$};
\draw [fill=ududff] (5.86374,6.99236) circle (1.5pt);
\draw[color=ududff] (6.004094634323034,7.849243251555577) node {$P_{3}$};
\draw [fill=ududff] (18.7586,0.47777) circle (1.5pt);
\draw[color=ududff] (19.7, 0.0072408803097450125) node {$P_{4}$};
\node at (10, -6) {$(1,2) \leq (2,3) \equiv (1,3) \leq (3,4) \equiv (2,4) \leq (1,4)$};
\end{scriptsize}
\end{tikzpicture}
\caption{Example of induced preorder on $D_4$}
\end{center}
\end{figure}

We also study a bipartite version of this problem. Consider collections $P$ and $Q$ of $n$ and $m$ points on $d$-dimensional Euclidean space, respectively. We allow each of $P$ and $Q$ to have repeated points, however we require each point of $P$ to be distinct from each point of $Q$.

If the points of $P$ are $p_i$ for $i\in[n]$ and the points of $Q$ are $q_i$ for $i\in[m]$, now the pair $(P, Q)$ induces a total preorder in the family of pairs $$B_{n,m} = \{(i,j) : i \in [n], j \in [m] \},$$ given by $(i_1,i_2)\leq (j_1,j_2)$ if and only if $\norm{p_{i_1}-q_{i_2}}\leq \norm{p_{j_1}-q_{j_2}}$. 

\begin{figure}[h!]
\begin{tikzpicture}[line cap=round,line join=round,>=triangle 45,x=1cm,y=1cm]
\clip(-10.47,-2.5) rectangle (14.77,2.5);
\draw [line width=1pt] (-6.93,-0.22)-- (-5.15,1.92);
\draw [line width=1pt] (-5.15,1.92)-- (-3.97,1);
\draw [line width=1pt] (-3.97,1)-- (-4.33,-1.36);
\draw [line width=1pt] (-4.33,-1.36)-- (-2.01,2.26);
\draw [line width=1pt] (-2.01,2.26)-- (-5.15,1.92);
\draw [line width=1pt] (-6.93,-0.22)-- (-4.33,-1.36);
\begin{scriptsize}
\draw [fill=ududff] (-4.33,-1.36) circle (1.5pt);
\draw[color=ududff] (-4.1,-1.45) node {$q_2$};
\draw [fill=ududff] (-5.15,1.92) circle (1.5pt);
\draw[color=ududff] (-5.07,2.17) node {$q_1$};
\draw [fill=ududff] (-2.01,2.26) circle (1.5pt);
\draw[color=ududff] (-1.69,2.35) node {$p_1$};
\draw [fill=ududff] (-3.97,1) circle (1.5pt);
\draw[color=ududff] (-3.75,1.21) node {$p_2$};
\draw [fill=ududff] (-6.93,-0.22) circle (1.5pt);
\draw[color=ududff] (-7.1,0) node {$p_3$};
\node at (-4.5, -2.2) {$(2,1) \leq (2,2) \equiv (1,1) \equiv (3,1) \equiv (3,2) \leq (1,2)$};
\end{scriptsize}
\end{tikzpicture}
\caption{Example of induced preorder on $B_{2,3}$}
\end{figure}

We show also an optimal bound for the minimal dimension where every total preorder on $B_{n,m}$ is obtainable in this way. For the linear order case, we give a near-optimal result.

\begin{theorem}
\label{thm:bipartite}
    Let $n \geq 2$ be an integer.
    \begin{itemize}
        \item The minimal dimension $d$ into which any linear order on $B_{n,m}$ can be induced by the pairwise distances between two point collections in $\mathbb{R}^d$ satisfies $\min(n,m)-1 \leq d \leq \min(n,m)$.
        \item The minimal dimension into which any total preorder on $B_{n,m}$ can be induced by the pairwise distances between two point collections in $\mathbb{R}^d$ is $d = \min(n,m)$.
    \end{itemize}
\end{theorem}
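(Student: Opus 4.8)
We may assume $n\le m$, so $\min(n,m)=n$; write the two collections as $p_1,\dots,p_n$ and $q_1,\dots,q_m$. Every linear order is in particular a total preorder, so the upper bound $d\le n$ for linear orders follows from the upper bound $d\le n$ for total preorders, and it remains to establish: \textbf{(i)} every total preorder on $B_{n,m}$ is realizable in $\mathbb{R}^n$; \textbf{(ii)} some total preorder on $B_{n,m}$ is not realizable in $\mathbb{R}^{n-1}$; \textbf{(iii)} some linear order on $B_{n,m}$ is not realizable in $\mathbb{R}^{n-2}$. For \textbf{(ii)} and \textbf{(iii)} it suffices to treat $m=n$: a (pre)order on $B_{n,n}$ not realizable in $\mathbb{R}^k$ extends to one on $B_{n,m}$ not realizable in $\mathbb{R}^k$, by putting all pairs $(i,j)$ with $j>n$ into one fresh class (resp.\ an arbitrary strict chain) above everything else, since any realization would restrict to a realization on $B_{n,n}$.

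For \textbf{(i)}, I would take $p_i=e_i$, the $i$-th standard basis vector of $\mathbb{R}^n$, and read off the $q_j$'s from the desired squared distances. Fix $D\colon B_{n,m}\to\mathbb{R}_{>0}$ that is constant on the classes of the given preorder, strictly increasing across classes, and with all values confined to a small window around one large number $v$ (the window small in terms of $v$ and $n$). For each $j$ put $\sigma_j=\sum_i D_{ij}$, $\tau_j=\sum_i D_{ij}^2$ and solve $nx^2-(2\sigma_j+4)x+(\tau_j+4)=0$ for a root $x_j$; since $(\sigma_j+2)^2-n(\tau_j+4)=4\sigma_j+4-4n-\sum_{i<k}(D_{ij}-D_{kj})^2$ is positive ($\sigma_j$ is close to $nv$ while the last sum is tiny), a real root exists, and the larger one satisfies $x_j\ge(\sigma_j+2)/n\ge1$. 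Setting $(q_j)_i=\tfrac12(x_j-D_{ij})$ gives $\norm{q_j}^2=x_j-1$ by the choice of $x_j$, whence $\norm{p_i-q_j}^2=1-2(q_j)_i+\norm{q_j}^2=D_{ij}$. Thus the configuration induces exactly the prescribed preorder, and $p_i\ne q_j$ holds for generic such $D$. This proves \textbf{(i)} and hence the upper bounds in both parts.

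For \textbf{(ii)} I would exhibit a total preorder on $B_{n,n}$ forcing any $(n-1)$-dimensional realization to be over-rigid. For $n=2$ the preorder $(2,2)\prec(1,1)\equiv(1,2)\equiv(2,1)$ already works: in $\mathbb{R}^1$ the coincidences $p_1=p_2$ and $q_1=q_2$ are each impossible, and with four distinct collinear points the three equalities pin the configuration (up to isometry) to $p_1,q_1,q_2,p_2$ at $0,s,-s,2s$, whence $\norm{p_2-q_2}=3s$, so $(2,2)$ cannot be the strict minimum. For general $n$ the plan is the same in spirit: pick a large block of pairs on which $D$ is constant, say $\norm{p_i-q_j}=t$ for $i$ in a fixed index set and $j$ in another, chosen so that in $\mathbb{R}^{n-1}$ one side is pinned to a $0$-dimensional sphere (two points) around the other side, forcing the remaining distances to take at most two values; then impose a strict chain of three distinct values among those remaining pairs. \emph{The main obstacle is that this pinning is valid only when an auxiliary subset of the points is affinely independent.} One must therefore either design the preorder (possibly after enlarging the $Q$-side via the reduction above) so that every affine degeneration of an $(n-1)$-dimensional configuration still yields a contradiction, or replace the geometric pinning by a rank bound: the observable partial Gram matrix with entries $\tfrac12(D_{i1}+D_{1j}-D_{ij}-D_{11})$ for $i\ne1,\ j\ne1$ has rank at most the embedding dimension, and one engineers $D$ so that no matrix respecting the preorder can have rank $\le n-1$. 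Carrying out this degeneration analysis is the main difficulty of the theorem.

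For \textbf{(iii)} I would use that $n$ points in $\mathbb{R}^{n-2}$ are necessarily affinely dependent: if $\sum_i\lambda_i p_i=0$ with $\sum_i\lambda_i=0$ and $\lambda\ne0$, then $\sum_i\lambda_i\norm{p_i-q_j}^2=\sum_i\lambda_i\norm{p_i}^2$ is independent of $j$. Hence any squared-distance matrix arising from $\mathbb{R}^{n-2}$ satisfies $\lambda^{\top}D=c\,\mathbf{1}^{\top}$ for some nonzero $\lambda\perp\mathbf{1}$ — equivalently, the partial Gram matrix above has rank at most $n-2$, so all its $(n-1)\times(n-1)$ minors vanish. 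These minors are nonzero polynomials in the $D_{ij}$, so a generic $D$ violates them, and the strict weak order of such a $D$ is a linear order that cannot be realized in $\mathbb{R}^{n-2}$ by any configuration. The delicate point is to check that a \emph{whole order cell}, not just one matrix $D$, avoids the common zero locus of these minors; an explicit construction of $D$, or a perturbation of the preorder from \textbf{(ii)}, should accomplish this. This yields $d\ge n-1$ for linear orders, which together with $d\le n$ leaves exactly the one-dimension gap stated in the theorem.
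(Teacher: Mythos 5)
Your upper bound (i) is correct and is a genuinely different construction from the paper's (the paper realizes each column by gluing regular simplices of side $1+\epsilon$ via isometries, while you place $p_i=e_i$ and solve a quadratic for each $q_j$; your algebra for the discriminant checks out, and $p_i\neq q_j$ is automatic since $D_{ij}>0$). The reduction of the lower bounds to $B_{n,n}$ and the deduction of the linear-order upper bound from the preorder one are also fine. However, the two lower bounds — which are the actual content of the theorem — are not proved. For (ii) you only verify $n=2$; for general $n$ you describe a ``plan'' and explicitly flag that the pinning argument needs an affine-independence/degeneration analysis that you do not carry out. That analysis is precisely the paper's Lemma \ref{lm:LowerBoundBipAffine}: a carefully chosen pattern of equalities and strict inequalities (rows $1,2$ all equal, row $i$ equal on the first $n+2-i$ columns and strictly larger afterwards, row $n$ strictly increasing, plus $(1,1)<(2,1)$) forces $p_1,\dots,p_n$ to be affinely independent by induction, after which in $\mathbb{R}^{n-1}$ the points $q_1,q_2,q_3$ would have to be collinear with concurrent perpendicular bisectors, a contradiction. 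Without this (or an equivalent) argument, (ii) is an unproven claim.

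The gap in (iii) is more than an omitted verification: the genericity/rank argument cannot work in the form you state it. That a \emph{generic} squared-distance matrix $D$ has the $(n-1)\times(n-1)$ minors of its partial Gram matrix nonzero shows only that this particular matrix is not realizable in $\mathbb{R}^{n-2}$; the theorem requires a linear \emph{order} none of whose realizing matrices (a full-dimensional cone of matrices) comes from $\mathbb{R}^{n-2}$, and a codimension-one variety can perfectly well meet every such cone. You acknowledge this (``the delicate point is to check that a whole order cell avoids the common zero locus''), but that is exactly the missing combinatorial-geometric construction. The paper does it with a different idea: it prescribes the cyclic conditions $(i,i)<(i+1,i)<\cdots<(i-1,i)$ for all $i$, observes that then each $q_i$ lies in $\bigcap_{j\neq i-1}H_j$ where $H_j$ is the open halfspace of the perpendicular bisector of $p_jp_{j+1}$ containing $p_j$, applies Helly's theorem in $\mathbb{R}^{n-2}$ to get a point $q\in\bigcap_j H_j$, and derives the cyclic contradiction $\norm{q-p_1}<\norm{q-p_2}<\cdots<\norm{q-p_1}$. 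Some argument of this kind (valid for \emph{every} configuration inducing the order, degenerate or not) is indispensable, so as it stands your proposal establishes the upper bounds but not the lower bounds in either bullet of the theorem.
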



As a reminder, a total preorder on a set $X$ is a reflexive and transitive relation $\leq$ in which every two elements of $X$ can be compared. We say that $x<y$ if $x\leq y$ is in the relation but $y\leq x$ is not. We say that $x\equiv y$ if and only if $x\leq y$ and $y\leq x$. It is immediate that $\equiv$ is an equivalence relation on $X$ and that $\leq$ induces a linear order on the equivalence classes. If $\leq$ is antisymmetric, then each equivalence class has exactly one element, so $\leq$ is itself a linear order on $X$.

Note that when $d=2$, the unit distance problem and the distinct distances problem impose restrictions on the size and number of equivalence classes induced by $\leq$, respectively.

We divide the proof of Theorem \ref{thm:main} in two sections. In Section \ref{sec:lower} we provide our lower bounds. We exhibit for each $n\geq 3$ a linear order on $D_n$ that cannot be induced from a family of $n$ points in $\mathbb{R}^{n-3}$. We also exhibit a preorder that cannot be induced from a family of $n$ points in $\mathbb{R}^{n-2}$.

In Section \ref{sec:upper} we use a powerful lemma on Euclidean distances and positive semidefinite matrices to induce any given linear order on $D_n$ from a point family in $\mathbb{R}^{n-2}$. To introduce our technique, first we prove that any preorder on $D_n$ can be attainable in $\mathbb{R}^{n-1}$. We then show how to adapt the technique to reduce the required dimension in the case of linear orders.


In a similar way, we divide the proof of Theorem \ref{thm:bipartite} in two sections. In Section \ref{sec:BipartiteLower} we show our lower bounds. We provide a linear order and a linear preorder on $B_{n,n}$ that cannot be induced from two collections of points with cardinality $n$ in $\mathbb{R}^{n-1}, \mathbb{R}^n$, respectively. This will suffice to obtain the lower bounds since $B_{n,n}$ is a subset of $B_{r,s}$, when $n = \min(r,s)$.

In Section \ref{sec:BipartiteUpper} we adapt the technique showed in Section \ref{sec:upper}.   


\section{Complete case, lower bounds}
\label{sec:lower}

If $n=3$, it is clear that we need $d\geq 1$ to attain every possible linear order on $D_3$. Thus, the first non-trivial case is $n=4$. We claim that there is no point collection in $\mathbb{R}$ that induces a linear order on $D_4$ with the following relations:

\begin{enumerate}
    \item $(1,4)$ is the (unique) maximal element of the linear order
    \item $(1,2)<(1,3)$
    \item $(2,4)<(3,4)$
\end{enumerate}

The proof is quite simple in this case: since $(1,4)$ is the maximal element, this forces $p_1$ and $p_4$ to be the extremal points in the geometric configuration. We may assume without loss of generality that $p_1$ is the leftmost point and $p_4$ is the rightmost point. The second relation forces $p_2$ to be closer to $p_1$ than $p_3$, but this contradicts the last relation.

\begin{figure}[h!]
\begin{center}
\begin{tikzpicture}[line cap=round,line join=round,>=triangle 45,x=1.5cm,y=1.5cm]
    \clip(-1,-0.28) rectangle (4,0.5);
    \draw [line width=1pt] (0,0)-- (3,0);
    \begin{tiny}
    \draw [fill=ududff] (0,0) circle (1.5pt);
    \draw[color=ududff] (-0.0051911975692012224,-0.18) node {$P_{1}$};
    \draw [fill=ududff] (1,0) circle (1.5pt);
    \draw[color=ududff] (0.9778721835887161,-0.18) node {$P_{2}$};
    \draw [fill=ududff] (1.5,0) circle (1.5pt);
    \draw[color=ududff] (1.473997628285235,-0.18) node {$P_{3}$};
    \draw [fill=ududff] (3,0) circle (1.5pt);
    \draw[color=ududff] (2.9623739623747922,-0.18) node {$P_{4}$};
    \end{tiny}
\end{tikzpicture}
\end{center}
\caption{Case n = 4}
\end{figure}

The constructions for larger values of $n$ require a careful selection of prescribed relations and convexity arguments. We begin by proving a geometric auxiliary result and a corollary.

\begin{prop}
  Let $P=\{p_1,\ldots,p_{d+1}\}$ be a set of $d+1$ affinely independent points in $\mathbb{R}^d$. For $i\in [d+1]$ let $\Pi_i$ be the hyperplane spanned by $P\setminus\{p_i\}$ and $H_i$ the closed halfspace defined by $\Pi_i$ in which $p_i$ lies.
  
  Let $\Pi$ be a hyperplane such that $p_1$ lies in one of its open halfspaces, which we call  $H^+$, and such that $P\setminus\{p_1\}$ is contained in the (closed) complement $H^-$. Then the closure $\Delta'$ of $$H^+\cap H_2 \cap \ldots \cap H_{d+1}$$ is a simplex contained in the simplex $$\Delta=H_1\cap H_2 \cap \ldots \cap H_{d+1}.$$
\end{prop}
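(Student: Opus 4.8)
The key object is the simplex $\Delta = H_1 \cap \cdots \cap H_{d+1}$, which has vertices exactly $p_1, \ldots, p_{d+1}$ (standard fact: the $i$-th vertex is the unique point in $\Delta$ not on $\Pi_i$, and each facet lies in one $\Pi_i$). The plan is to show that replacing the constraint $H_1$ by the stricter constraint $H^+$ simply "slices off" the vertex $p_1$, leaving a smaller simplex with $d+1$ vertices of its own.

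First I would set up barycentric coordinates with respect to $P$: every point $x \in \mathbb{R}^d$ is written uniquely as $x = \sum \lambda_i p_i$ with $\sum \lambda_i = 1$, and $H_i = \{\lambda_i \geq 0\}$, so $\Delta = \{\lambda_i \geq 0 \text{ for all } i\}$. The hyperplane $\Pi$ is an affine functional, so in barycentric coordinates it has the form $\ell(x) = \sum c_i \lambda_i = c$ for some constants; since $p_1 \in H^+$ and $p_2, \ldots, p_{d+1} \in H^-$ with $\Pi$ separating them, we get $c_1 > c \geq c_j$ for $j \geq 2$ (after normalizing the sign so $H^+ = \{\ell > c\}$). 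Then I would observe that $\Delta' = \overline{H^+ \cap H_2 \cap \cdots \cap H_{d+1}}$ is cut out inside $\Delta$ by the single extra inequality $\ell(x) \geq c$, i.e. $\Delta' = \{x : \lambda_i \geq 0 \ \forall i \geq 2, \ \sum c_i \lambda_i \geq c\}$ (with $\lambda_1 = 1 - \sum_{i\geq 2}\lambda_i$), so $\Delta' \subseteq \Delta$ is immediate once we check the constraint $\lambda_1 \geq 0$ is implied — and it is, because on $\Delta'$ the point is a convex combination of the vertices of $\Delta'$, which I identify next.

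The main step is to exhibit the $d+1$ vertices of $\Delta'$ explicitly and verify $\Delta'$ is their convex hull. The natural candidates are: $p_2, \ldots, p_{d+1}$ themselves (each satisfies $\ell \geq c$ since $\ell(p_j) = c_j \leq c$... wait, they satisfy $\ell(p_j) = c_j$, and we need $c_j \le c$, but $H^+ = \{\ell > c\}$ contains $p_1$ with $\ell(p_1) = c_1 > c$, and the $p_j$ for $j \ge 2$ lie in $H^- = \{\ell \le c\}$, so $c_j \le c$; hence $p_j \notin H^+$ but $p_j \in \overline{H^+}$ only if $c_j = c$) — so actually the $p_j$ need NOT lie in $\Delta'$. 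The correct vertices are: for each $j \in \{2,\ldots,d+1\}$, the point $v_j$ where the edge $[p_1, p_j]$ of $\Delta$ meets $\Pi$, plus one more vertex, namely $p_1$ itself only if... no. Let me instead describe it cleanly: $\Delta'$ is the intersection of $\Delta$ with the halfspace $\{\ell \ge c\}$; since this halfspace's boundary $\Pi$ strictly separates the single vertex $p_1$ from the remaining $d$ vertices, a standard fact about cutting a simplex by a hyperplane that separates one vertex from the rest tells us the intersection on the $p_1$-side is again a simplex, with vertices the $d$ points $[p_1,p_j] \cap \Pi$ for $j \ge 2$ together with $p_1$ — that is $d+1$ vertices. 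I would prove this by checking affine independence of $\{p_1\} \cup \{v_j : j\ge 2\}$ (each $v_j = p_1 + t_j(p_j - p_1)$ with $t_j \in (0,1)$, and these directions $p_j - p_1$ are independent) and then checking that the polytope defined by the $d+1$ inequalities ($\lambda_i \ge 0$ for $i \ge 2$, and $\ell \ge c$) has exactly these as vertices by a dimension/facet count, each facet being supported by one inequality.

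The last paragraph just assembles: $\Delta'$ has $d+1$ affinely independent vertices, hence is a $d$-simplex; it satisfies all the inequalities defining $\Delta$ (the constraints $\lambda_i \ge 0$, $i \ge 2$, are shared, and $\lambda_1 \ge 0$ holds on $\Delta'$ because every point of $\Delta'$ is a convex combination of $p_1$ and the $v_j$, all of which have $\lambda_1 \ge 0$); therefore $\Delta' \subseteq \Delta$. The hard part will be the bookkeeping showing the cut polytope is exactly the simplex on those $d+1$ vertices rather than something with more facets — i.e. ruling out that some $\lambda_i \ge 0$ constraint becomes redundant or that $\Pi$ passes through a vertex; this is handled by the hypothesis that $\Pi$ has $p_1$ in its \emph{open} halfspace (so $\Pi$ misses $p_1$) and $P \setminus \{p_1\}$ in the closed complement, which guarantees $t_j \in (0,1)$ strictly and keeps all $d+1$ facets active.
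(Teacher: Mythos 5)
Your proposal is correct in substance, but it takes a genuinely different route from the paper. You work in barycentric coordinates, observe that $H_2\cap\ldots\cap H_{d+1}$ is the simplicial cone with apex $p_1$ generated by the directions $p_j-p_1$, and identify $\Delta'$ explicitly as the simplex with vertices $p_1$ and the points $v_j$ where the rays toward $p_j$ meet $\Pi$; containment in $\Delta$ then follows because each of these vertices has $\lambda_1\geq 0$. The paper avoids coordinates and any vertex bookkeeping: it takes an arbitrary $x\in H^+\cap H_2\cap\ldots\cap H_{d+1}$, assumes $x\notin H_1$, and notes that the segment $p_1x$ would then cross $\mathrm{conv}(P\setminus\{p_1\})\subseteq\Pi_1$ in a point $y$ that lies in the open halfspace $H^+$ (both endpoints do) and simultaneously in $H^-$ (as a convex combination of points of $H^-$), a contradiction; closedness of $\Delta$ finishes the argument. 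Your approach costs more work (you must prove the truncated cone is exactly $\mathrm{conv}(p_1,v_2,\ldots,v_{d+1})$, which you only sketch via a ``facet count,'' and you should also justify that the closure of $H^+\cap H_2\cap\ldots\cap H_{d+1}$ equals the set cut out by $\ell\geq c$, e.g.\ by pushing points slightly toward $p_1$), but it buys an explicit description of $\Delta'$ and makes the ``simplex'' part of the statement transparent, which the paper's proof essentially leaves implicit. One small slip: the hypotheses only place $P\setminus\{p_1\}$ in the \emph{closed} complement $H^-$, so some $p_j$ may lie on $\Pi$ and the parameters $t_j$ lie in $(0,1]$, not necessarily in $(0,1)$; this is harmless, since $d+1$ affinely independent vertices and the containment argument survive (in the extreme case $\Pi\supseteq P\setminus\{p_1\}$ one simply gets $\Delta'=\Delta$), but your closing claim that the hypotheses force all $t_j$ strictly interior is not accurate.
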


\begin{proof}
  Let $x$ be a point in $$H^+\cap H_2 \cap \ldots \cap H_{d+1}.$$ 
  
  This point lies in the convex cone $H_2 \cap \ldots \cap H_{d+1}$ that can be partitioned by $\Pi_1$ into $\Delta$ and a region $R$. If $x$ does not lie in $H_1$, then $x$ is in the region $R$ and therefore the segment $p_1x$ intersects the convex hull of $P\setminus\{p_1\}$ in a point $y$.
  
  On the one hand, $y$ would be on the segment $p_1x$, whose endpoints are in $H^+$. This implies that $y$ is in $H^+$. On the other hand, $y$ would be a convex combination of the points in $P\setminus\{p_1\}$, all of which lie in $H^-$. By convexity, $y$ itself would be in $H^-$. This is a contradiction.
  
  We conclude that $x$ lies in $H_1$, and since it originally lies in $H_2 \cap \ldots \cap H_{d+1}$, then it is in $\Delta$. Since $\Delta$ is closed, we conclude $\Delta'\subseteq \Delta$.
  
\end{proof}

\textbf{Edit made on February 2026:} Corollary 4 below is false as stated, noted in blue. For a corrected version, see Lemma 5 from \cite{maldonado25}. As a consequence, the counterexample claimed in Proposition 5 turns out to be realizable in $\mathbb{R}^{n-3}$. See Proposition 6 from \cite{maldonado25} for a set of points in $\mathbb{R}^{n-3}$ with the proposed linear order. This leaves open the question for $\mathbb{R}^{n-3}$ whether any order for pairwise distances on $n$ points is realizable or not.

\textcolor{blue}{By repeatedly applying the lemma above, we have the following consequence.}

\begin{coro}
\label{coro:hyper}
Let $P=\{p_1,\ldots,p_{d+1}\}$ be a set of $d+1$ affinely independent points in $\mathbb{R}^d$. For $i\in [d+1]$ let $\Pi_i$ be a hyperplane such that $p_i$ lies on one of its open halfspaces $H_i$, and $P\setminus\{p_i\}$ is contained in the complement of $H_i$.

Then the closure $\Delta'$ of $H_1\cap H_2 \cap \ldots \cap H_{d+1}$ is a (possibly empty) simplex contained in the simplex spanned by $P$.
\end{coro}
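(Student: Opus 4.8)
The plan is to deduce the corollary from the proposition by an induction on the number of "active" halfspace constraints that cut off a single vertex. Concretely, write $\Delta = H_1' \cap \ldots \cap H_{d+1}'$, where $H_i'$ is the closed halfspace bounded by $\Pi_i$ (the hyperplane spanned by $P \setminus \{p_i\}$) containing $p_i$; this is the simplex spanned by $P$. For each $k \in \{0,1,\ldots,d+1\}$ I would consider the set
\[
\Delta^{(k)} = \overline{H_1 \cap \ldots \cap H_k \cap H_{k+1}' \cap \ldots \cap H_{d+1}'},
\]
so that $\Delta^{(0)} = \Delta$ is the simplex spanned by $P$, and $\Delta^{(d+1)}$ is the closure of $H_1 \cap \ldots \cap H_{d+1}$, the set we care about. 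The claim to prove by induction is that each $\Delta^{(k)}$ is a (possibly empty) simplex contained in the simplex spanned by $P$.

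The base case $k=0$ is immediate. For the inductive step, suppose $\Delta^{(k-1)}$ is a simplex contained in $\Delta$; I want to show $\Delta^{(k)} \subseteq \Delta^{(k-1)}$ is again a simplex. The idea is to apply the proposition with the index $k$ playing the role of the index $1$: the hyperplane $\Pi_k$ separates $p_k$ (in its open halfspace $H_k$) from the remaining points $P \setminus \{p_k\}$ (in the closed complement), which is exactly the hypothesis the proposition needs on $\Pi$. The only subtlety is that the proposition is stated with the \emph{canonical} halfspaces $H_2, \ldots, H_{d+1}$ coming from the spanned hyperplanes $\Pi_2,\ldots,\Pi_{d+1}$ (i.e.\ the $H_j'$ in my notation), not the already-replaced ones $H_1, \ldots, H_{k-1}$. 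So the proposition directly gives only that
\[
\overline{H_k \cap H_1' \cap \ldots \cap \widehat{H_k'} \cap \ldots \cap H_{d+1}'} \subseteq \Delta;
\]
I then intersect both sides with $H_1 \cap \ldots \cap H_{k-1}$ and use that intersecting a simplex with a collection of halfspaces, each of which strictly separates one vertex from the others, keeps producing a simplex — which is precisely what the inductive hypothesis together with a second use of the same proposition (or a direct convexity argument) provides. The cleanest route is probably to reorganize the induction so that at each step I apply the proposition to the simplex $\Delta^{(k-1)}$ \emph{in place of $\Delta$}, checking that $\Delta^{(k-1)}$, being a face-to-face sub-simplex of $\Delta$ sharing the vertices not yet cut off, still has $p_k$ in the open halfspace $H_k$ and its other vertices in the closed complement.

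The main obstacle I anticipate is bookkeeping: verifying that after replacing several $H_j'$ by the thinner $H_j$, the intermediate region $\Delta^{(k-1)}$ is still a simplex whose vertex set relates to $\{p_1,\ldots,p_{d+1}\}$ in a way that lets me re-invoke the proposition for the next index. In particular I need that $p_k \in H_k$ remains true for the relevant sub-simplex (it does, since $H_k$ is defined by its open halfspace containing $p_k$, and $p_k$ is still a vertex of $\Delta^{(k-1)}$ as long as $k$ has not yet been processed) and that all other vertices lie in the closed complement of $H_k$ (this needs the hypothesis that $P \setminus \{p_k\} \subseteq \mathbb{R}^d \setminus H_k$ together with convexity, since the vertices of $\Delta^{(k-1)}$ are convex combinations of points of $P$). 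Once this is set up carefully, each application of the proposition strictly decreases the number of unprocessed indices, and after $d+1$ steps we reach $\Delta^{(d+1)} = \Delta'$, which is therefore a simplex contained in the simplex spanned by $P$, as claimed. I would also remark that $\Delta'$ may be empty, which is consistent with the degenerate convention that the empty set counts as a simplex.
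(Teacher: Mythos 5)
Your overall strategy---iterating the Proposition and replacing one canonical facet halfspace $H_k'$ by the given halfspace $H_k$ at each step---is exactly what the paper intends (its proof is the single sentence ``by repeatedly applying the lemma above''). The gap is in your justification of the inductive step. After the first cut, $\Delta^{(1)}=\overline{H_1\cap H_2'\cap\cdots\cap H_{d+1}'}$ is the corner of $\Delta$ at $p_1$: its vertices are $p_1$ together with points of $\Pi_1$ lying on the segments $p_1p_j$. In particular $p_2,\ldots,p_{d+1}$ are in general \emph{not} vertices of $\Delta^{(1)}$---they need not even belong to $\Delta^{(1)}$, since they lie in the closed complement of $H_1$. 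So your key claim that ``$p_k$ is still a vertex of $\Delta^{(k-1)}$ as long as $k$ has not yet been processed'' is false, and with it the verification of the Proposition's hypothesis at step $k$: the vertex of $\Delta^{(k-1)}$ opposite its facet on $\Pi_k'$ is some new point, and nothing in the hypotheses forces it to lie in $H_k$. (Your other check is also imprecise: vertices of $\Delta^{(k-1)}$ are convex combinations of points of $P$, but so is $p_k\in H_k$; what is true is that the vertices lying on $\Pi_k'$ belong to $\mathrm{conv}(P\setminus\{p_k\})$, hence to the closed complement of $H_k$---which only helps if the remaining, distinguished vertex really is in $H_k$.)

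Moreover, this is not repairable bookkeeping: since $H_k\not\subseteq H_k'$, once several canonical halfspaces have been replaced the region can escape the simplex entirely, and the statement with no hypothesis beyond ``$\Pi_i$ separates $p_i$ from $P\setminus\{p_i\}$'' actually fails. Take $d=2$, $p_1=(0,0)$, $p_2=(1,0)$, $p_3=(0,1)$, and $H_1=\{x+y<\tfrac12\}$, $H_2=\{x-2y>\tfrac1{10}\}$, $H_3=\{y-2x>\tfrac1{10}\}$; each $H_i$ contains $p_i$ and excludes the other two points, yet every point $(-t,-t)$ with $t\ge 1$ lies in $H_1\cap H_2\cap H_3$, so the closure of this intersection is unbounded, is not a simplex, and is not contained in the triangle. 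Consequently the inductive scheme cannot go through on the stated hypotheses alone (the paper's one-line proof glosses over the same issue); a correct argument must exploit the extra structure present where the corollary is applied---there all but one of the $\Pi_i$ are perpendicular bisectors of segments sharing the endpoint $p_{n-2}$ and the last one is the facet hyperplane spanned by the remaining points---or the corollary's hypotheses must be strengthened. As written, your step from $\Delta^{(k-1)}$ to $\Delta^{(k)}$ for $k\ge 2$ does not hold, and the obstacle is essential rather than notational.
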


\begin{prop}
  Let $n\geq 4$ be an integer. Then there is no collection of $n$ points $P$ in general position in $\mathbb{R}^{n-3}$ that induces a linear order on $D_n$ including the following in the relation:
  \begin{enumerate}
      \item For any pair $(i,j)$ in $D_{n-3}$ and a pair $(k,l)$ in $D_n\setminus D_{n-3}$, we have $(i,j)>(k,l)$.
      \item For any pair $(i,j) \in [n-3] \times \{n-2,n-1,n\}$ and a pair $(k,l)$ in $D_n \setminus ([n-3] \times \{n-2,n-1,n\})$ we have $(i,j)<(k,l)$.
  \end{enumerate}
\end{prop}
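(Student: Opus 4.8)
The plan is to argue by contradiction: suppose such a collection $P = \{p_1, \ldots, p_n\}$ exists in $\mathbb{R}^{n-3}$. I will extract geometric consequences from the two prescribed families of relations and show they are incompatible with the dimension $n-3$.

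First I would analyze condition (1). It says every distance among the first $n-3$ points dominates every distance involving one of the last three points. In particular, the pair realizing the minimum among $\{\|p_i - p_j\| : i,j \in [n-3]\}$ is still larger than every $\|p_i - p_k\|$ for $k \in \{n-2,n-1,n\}$. The natural way to exploit this is to observe that $Q = \{p_1,\ldots,p_{n-3}\}$ is a set of $n-3$ affinely independent points (general position) spanning an $(n-4)$-dimensional affine subspace of $\mathbb{R}^{n-3}$, and that each $p_k$ with $k\in\{n-2,n-1,n\}$ must be "close" to all of $Q$ simultaneously. Concretely, for each $i\in[n-3]$ let $\Pi_i$ be the hyperplane through $Q\setminus\{p_i\}$ chosen (via a perpendicular-bisector-type argument) so that $p_i$ lies in an open halfspace $H_i$ while all the $p_k$, $k\in\{n-2,n-1,n\}$, lie in its complement; condition (1) is exactly what guarantees such a separating hyperplane exists — moving from $p_i$ toward the opposite face of the simplex $\mathrm{conv}(Q)$ decreases the distance to that face, and condition (1) forces each $p_k$ to be on the far side. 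Then Corollary~\ref{coro:hyper}, applied inside the affine hull of $Q$ (an $(n-4)$-dimensional space containing the $n-3$ affinely independent points $Q$), would force the $p_k$'s to lie in the simplex $\mathrm{conv}(Q)$; but then the three points $p_{n-2},p_{n-1},p_n$ together with $Q$ would not be in general position, or more directly, a point inside $\mathrm{conv}(Q)$ cannot be strictly closer to every vertex $p_i$ than the pairwise distances within $Q$ — this is the contradiction. The key quantitative fact I would isolate is: if $x \in \mathrm{conv}(Q)$, then $\max_i \|x - p_i\| \geq \tfrac12 \mathrm{diam}(Q)$, or some similar inequality ruling out $x$ being uniformly closer to all vertices than $\min_{i<j}\|p_i-p_j\|$.

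Next I would bring in condition (2), which plays the symmetric role for the three "small" points: within the block $[n-3]\times\{n-2,n-1,n\}$ all distances are dominated by every remaining distance, in particular by the three mutual distances $\|p_{n-2}-p_{n-1}\|$, $\|p_{n-2}-p_n\|$, $\|p_{n-1}-p_n\|$ and by all distances inside $Q$. So the bipartite distances from $Q$ to $\{p_{n-2},p_{n-1},p_n\}$ are the globally smallest. Combined with the first analysis, this says: each of $p_{n-2}, p_{n-1}, p_n$ lies extremely close to the affine span of $Q$ and in fact near the whole configuration $Q$, while the three of them are comparatively far apart from each other and $Q$ has large diameter — a configuration that cannot be squeezed into $\mathbb{R}^{n-3}$ because realizing $n-3$ affinely independent points plus three further points each near all of them forces the ambient dimension up to at least $n-2$ (the three new points must escape the $(n-4)$-flat of $Q$ in three independent directions while staying close to $Q$, which is impossible in codimension one).

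The main obstacle, I expect, is making the "closeness forces containment in the simplex" step rigorous with the correct choice of the hyperplanes $\Pi_i$ so that Corollary~\ref{coro:hyper} genuinely applies: one must verify that condition (1) alone gives, for each $i$, a hyperplane separating $p_i$ from all three small points, and that these hyperplanes are the ones whose intersection the corollary controls. A clean route is to take $\Pi_i$ to be a translate of the affine hull of $Q\setminus\{p_i\}$ pushed slightly toward $p_i$; then "$p_k$ on the far side of $\Pi_i$" becomes an inequality comparing $\|p_k - p_j\|$ (for $j$ in the opposite face) with the width of the simplex at $p_i$, which is implied by condition (1) after choosing the translate small enough. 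Once all $n-3$ such hyperplanes are in place, the corollary confines the three small points to $\mathrm{conv}(Q)$, and a short metric argument — a point of a simplex is within $\mathrm{diam}/\sqrt{2}$... actually within the circumradius, hence cannot beat the minimum edge by the required strict margin — delivers the contradiction. I would also need to double-check the degenerate low-$n$ cases ($n=4$, where $\mathbb{R}^{n-3}=\mathbb{R}$) reduce to the elementary argument already given in the text.
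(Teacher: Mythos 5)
Your central step fails. You propose to apply Corollary~\ref{coro:hyper} \emph{inside the affine hull of} $Q=\{p_1,\dots,p_{n-3}\}$ to conclude that $p_{n-2},p_{n-1},p_n$ lie in $\mathrm{conv}(Q)$. But by general position in $\mathbb{R}^{n-3}$ none of these three points even lies in $\mathrm{aff}(Q)$ (adjoining any one of them to $Q$ gives $n-2$ affinely independent points), so no separation argument can confine them to $\mathrm{conv}(Q)$; the corollary applied inside the $(n-4)$-flat $\mathrm{aff}(Q)$ only controls points of that flat. Your closing metric contradiction also does not exist: conditions (1)--(2) are perfectly realizable in $\mathbb{R}^{n-2}$ (that is exactly the content of the upper bound), so any valid argument must use the ambient dimension $n-3$ in an essential way, and your inequality does not. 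Indeed a point of $\mathrm{conv}(Q)$ \emph{can} be strictly closer to every vertex than the minimum edge length (the circumcenter of a regular simplex is at distance less than the edge from all vertices), so $\max_i\norm{x-p_i}\geq\tfrac12\mathrm{diam}(Q)$ is compatible with the prescribed order. Finally, the claim that the three small points ``must escape the flat in three independent directions'' is unsupported: in $\mathbb{R}^{n-3}$ there is a single normal direction to $\mathrm{aff}(Q)$ and all three points may use it.

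The paper exploits the dimension quite differently: since $\mathrm{aff}(Q)$ is a hyperplane $\Pi$ of $\mathbb{R}^{n-3}$, by pigeonhole two of $p_{n-2},p_{n-1},p_n$, say $p_{n-2}$ and $p_{n-1}$, lie on the same open side of $\Pi$. One then works with the \emph{full-dimensional} simplex on $\{p_1,\dots,p_{n-2}\}$ and applies Corollary~\ref{coro:hyper} in the ambient space, taking $\Pi_i$ to be the perpendicular bisector of $p_ip_{n-2}$ for $i\in[n-3]$ (condition (1) gives $\norm{p_i-p_j}>\norm{p_j-p_{n-2}}$, which is exactly the required separation) and $\Pi_{n-2}=\Pi$. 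Condition (2) gives $\norm{p_{n-1}-p_{n-2}}>\norm{p_{n-1}-p_i}$ for all $i\in[n-3]$, so $p_{n-1}$ lies in every open halfspace $H_i$ and hence in the simplex spanned by $\{p_1,\dots,p_{n-2}\}$; the symmetric argument places $p_{n-2}$ in the simplex spanned by $\{p_1,\dots,p_{n-3},p_{n-1}\}$, forcing $p_{n-2}=p_{n-1}$, a contradiction. Your sketch reaches neither of these two essential ingredients (the pigeonhole use of codimension one and the mutual-containment contradiction), so as written it does not prove the proposition.
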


\begin{figure}[h!]
\begin{center}
\definecolor{ffqqqq}{rgb}{1,0.5,0}
\definecolor{qqzzff}{rgb}{0,0.6,1}
\definecolor{qqwuqq}{rgb}{0,0.39215686274509803,0}
\definecolor{ududff}{rgb}{0.30196078431372547,0.30196078431372547,1}
\begin{tikzpicture}[line cap=round,line join=round,>=triangle 45,x=1.2cm,y=1.2cm]
    \clip(-1.2,-2.8) rectangle (6,1.2);
    \draw [rotate around={-85.80233164283867:(0.72,-0.57)},line width=0.5pt,dotted] (0.72,-0.57) ellipse (1.672552329971275cm and 1.2660692305290182cm);
    \draw [line width=1pt,color=qqwuqq] (-0.1,-0.32)-- (1.36,-1.38);
    \draw [line width=1pt,color=qqzzff] (1.24,-0.08)-- (2.84,-0.58);
    \draw [rotate around={-87.24089234137965:(3.26,-0.73)},line width=0.5pt,dotted] (3.26,-0.73) ellipse (1.2098376837290625cm and 0.8793220234765564cm);
    \draw [line width=1pt,color=ffqqqq] (3.5,0.02)-- (3.36,-1.54);
    \begin{scriptsize}
    \draw [fill=ududff] (0.64,0.52) circle (1.5pt);
    \draw [fill=ududff] (0.8,-1.66) circle (1.5pt);
    \draw [fill=ududff] (1.24,-0.08) circle (1.5pt);
    \draw [fill=ududff] (0.26,-1.14) circle (1.5pt);
    \draw [fill=ududff] (-0.1,-0.32) circle (1.5pt);
    \draw [fill=ududff] (1.36,-1.38) circle (1.5pt);
    \draw [fill=ududff] (2.84,-0.58) circle (1.5pt);
    \draw [fill=ududff] (3.5,0.02) circle (1.5pt);
    \draw [fill=ududff] (3.36,-1.54) circle (1.5pt);
    \node at (0.75,-2.5) {$\{P_1, \dots, P_{n-3}\}$};
    \node at (3.4,-2.5) {$\{P_{n-2}, P_{n-1}, P_n\}$};
    \draw [fill=qqzzff] (4.7,-0.8) circle (2.5pt);
    \node at (5, -0.8) { $<$ };
    \draw [fill=ffqqqq] (5.3,-0.8) circle (2.5pt);
    \node at (5.6, -0.8) { $<$ };
    \draw [fill=qqwuqq] (5.9,-0.8) circle (2.5pt);
    \end{scriptsize}
\end{tikzpicture}
\end{center}
\caption{Relations for order in $D_n$}
\end{figure}

\begin{proof}
We proceed by contradiction. Suppose that there is a point collection $P=(p_1,p_2,\ldots,p_n)$ that induces a linear order $\leq$ on $D_n$ with the given relations. Since $\leq$ is a linear order, all points from $P$ are distinct and all pairwise distances of $P$ are also different.

Let $\Pi$ be the affine hyperplane spanned by $p_1,\ldots,p_{n-3}$. Among the points $p_{n-2},p_{n-1},p_n$, two of them lie on the same open halfspace defined by $\Pi$. Without loss of generality, we may assume they are $p_{n-2}$ and $p_{n-1}$. We now show that  $p_{n-1}$ lies in the simplex $\Delta$ spanned by $\{p_1,\ldots,p_{n-2}\}$.

To do so, for $i\in [n-3]$ let $\Pi_i$ be the perpendicular bisector hyperplane to the segment $p_ip_{n-2}$ and $\Pi_{n-2}=\Pi$, the hyperplane spanned by $\{p_1,\ldots,p_{n-3}\}$. For $i$ in $[n-2]$ let $H_i$ be the open halfspace of $\Pi_i$ on which $p_i$ lies.

By the relations in (1), the distances among points $p_i,p_j$ with $i,j\in [n-3]$ are the largest. This implies that for distinct $i,j \in [n-3]$ we have $\norm{p_i-p_j}> \norm{p_j-p_{n-2}}$, so $p_j$ is on the opposite open halfspace defined by $\Pi_i$ as $p_i$. This means that $H_1,\ldots,H_{n-2}$ satisfy the hypothesis of Corollary \ref{coro:hyper}, and therefore the closure $\Delta'$ of $H_1\cap\ldots\cap H_{n-2}$ is contained in $\Delta$.

To finish the proof of our claim, note that the relations in (2) imply that the next largest distances are among points $p_i,p_j$ with $i,j\in\{n-2,n-1,n\}$, so the remaining distances are smaller than these. In particular, for every $i$ in $[n-3]$ we have that $$\norm{p_{n-1}-p_{n-2}}>\norm{p_{n-1}-p_{i}},$$

and thus $p_{n-1}$ lies in $H_i$. Since $p_{n-1}$ was originally chosen to be in $H_{n-2}$, we conclude that $p_{n-1}$ is in $\Delta'\subseteq \Delta$, as claimed.

An analogous proof shows that $p_{n-2}$ lies in the simplex spanned by $P\setminus\{p_{n-2},p_n\}$. We conclude that $p_{n-2}=p_{n-1}$, a contradiction to $P$ having $n$ distinct points.

\end{proof}

\begin{figure}[h!]
\begin{center}
    \definecolor{ffvvqq}{rgb}{1,0.3333333333333333,0}
    \definecolor{ududff}{rgb}{0.30196078431372547,0.30196078431372547,1}
    \begin{tikzpicture}[line cap=round,line join=round,>=triangle 45,x=1cm,y=1cm]
    \clip(-1,-2) rectangle (5.5,3);
    \fill[line width=1pt,color=ffvvqq,fill=ffvvqq,fill opacity=0.3] (1.7008784118237559,0.006181847572183319) -- (1.9737007322255737,0.3868339422280541) -- (2.822433590580517,-0.0005056065383293601) -- cycle;
    \draw [line width=0.8pt,dotted,domain=-2.540024240124981:8.525926953575134] plot(\x,{(-0-0.0007071067530400426*\x)/3.9472627440716916});
    \draw [line width=1pt] (0,0)-- (0.9726538452146883,2.1312651415745);
    \draw [line width=1pt] (0.9726538452146883,2.1312651415745)-- (3.9472627440716916,-0.0007071067530400426);
    \draw [line width=1pt] (0,0)-- (3.9472627440716916,-0.0007071067530400426);
    \draw [line width=0.8pt,dotted,domain=-2.540024240124981:8.525926953575134] plot(\x,{(-2.7441733031507463--0.9726538452146883*\x)/-2.1312651415745});
    \draw [line width=0.8pt,dotted,domain=0.540024240124981:3.525926953575134] plot(\x,{(-5.046268532217425--2.9746088988570034*\x)/2.13197224832754});
    \begin{tiny}
    \draw [fill=ududff] (0,0) circle (1.5pt);
    \draw[color=ududff] (-0.10348452775064385,-0.22608772875693935) node {$P_{1}$};
    \draw [fill=ududff] (3.9472627440716916,-0.0007071067530400426) circle (1.5pt);
    \draw[color=ududff] (4.1096987248966474,-0.22608772875693935) node {$P_{2}$};
    \draw [fill=ududff] (0.9726538452146883,2.1312651415745) circle (1.5pt);
    \draw[color=ududff] (0.9929583428178078,2.390147486014273) node {$P_{3}$};
    \draw [fill=ududff] (3.3381278159781074,1.3698464814575215) circle (1.5pt);
    \draw[color=ududff] (3.4091935575890258,1.6287288258972943) node {$P_{4}$};
    \draw [fill=ududff] (1.67316,-1.05654) circle (1.5pt);
    \draw[color=ududff] (1.6325500173160714,-1.2920738529207095) node {$P_{5}$};
    \end{tiny}
    \begin{scriptsize}
    \draw (4.8,0.2) node {$\Pi$};
    \draw (-0.10348452775064385,1.5) node {$\Pi_1$};
    \draw (3,2.3) node {$\Pi_2$};
    \end{scriptsize}
    \end{tikzpicture}
    \caption{Proof for $n = 5$ in the plane}
\end{center}
\end{figure}

Now we focus on the lower bound for total preorders.

\begin{prop}
\label{prop:lowerpre}
  Let $n\geq 3$ be an integer. Then there is no collection of $n$ points $P$ in $\mathbb{R}^{n-2}$ that induces a preorder on $D_n$ in which $(n-1,n)$ is a unique minimal element and the rest of the pairs belong to a single and maximal class.
\end{prop}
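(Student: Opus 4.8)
The plan is to argue by contradiction, exploiting the rigidity of regular simplices. Suppose $P=(p_1,\dots,p_n)$ is a collection in $\mathbb{R}^{n-2}$ inducing the described preorder. After rescaling we may assume the maximal class corresponds to distance $1$, so $\norm{p_i-p_j}=1$ for every pair $\{i,j\}\neq\{n-1,n\}$, while $s:=\norm{p_{n-1}-p_n}<1$ since $(n-1,n)$ is strictly smaller than every other pair. The key observation is that the $n-2$ points $Q:=\{p_1,\dots,p_{n-2}\}$ have all pairwise distances equal to $1$ (so they form a regular simplex with unit edges, in particular they are affinely independent), and that both $p_{n-1}$ and $p_n$ lie at distance exactly $1$ from every point of $Q$. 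I will show these constraints leave only two possible positions for such a point, and that those two positions are too far apart to be within distance $s<1$ of each other; hence $p_{n-1}=p_n$, contradicting that $P$ is in general position.

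The main step is to pin down the admissible positions. Since $Q$ is affinely independent, let $A$ be its affine hull, of dimension $n-3$, and $c$ its circumcenter; the locus of points of $\mathbb{R}^{n-2}$ equidistant from all of $Q$ is the affine subspace through $c$ orthogonal to $A$, which has dimension $(n-2)-(n-3)=1$, i.e.\ a line $\ell$. Parametrizing $\ell$ by signed arclength $t$ from $c$ and writing $R$ for the circumradius of $Q$, the Pythagorean theorem says a point of $\ell$ is at distance $1$ from $Q$ exactly when $t^2=1-R^2$. For a regular simplex with $n-2$ unit-length edges one has $R^2=\tfrac{n-3}{2(n-2)}<\tfrac12$, so $1-R^2>0$ and there are precisely two such points $a,b=c\pm\sqrt{1-R^2}\,u$ (with $u$ a unit normal to $A$), with $\norm{a-b}^2=4(1-R^2)=\tfrac{2(n-1)}{n-2}>2$.

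It then remains only to conclude: both $p_{n-1}$ and $p_n$ lie in $\{a,b\}$, and since $\norm{a-b}>\sqrt2>1>s$ they cannot be the two distinct points $a$ and $b$, so $p_{n-1}=p_n$ — contradicting that $P$ consists of $n$ points in general position. (The case $n=3$ is included: then $Q=\{p_1\}$, $R=0$, and the two admissible positions for $p_2,p_3$ are $p_1\pm u$, at distance $2>1$.)

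The step I expect to be the crux is the middle one: one must check simultaneously that the set of points equidistant from $Q$ is exactly a line — a dimension count that uses the full $n-2$ dimensions of the ambient space, and is precisely what fails in $\mathbb{R}^{n-1}$ — and that the two resulting candidates are genuinely more than $1$ apart, i.e.\ that the circumradius of a regular unit $(n-3)$-simplex is below $\sqrt3/2$. A variant closer to the positive semidefinite techniques used later in the paper would instead form the Gram matrix $G$ of $p_2-p_1,\dots,p_n-p_1$: all its entries are forced ($1$ on the diagonal, $\tfrac12$ off-diagonal except $1-\tfrac{s^2}{2}$ in the $(n-1,n)$ position), and a short computation factors $\det G$ as $\tfrac{s^2}{2}$ times a positive quantity, so the constraint $\operatorname{rank}G\le n-2$ again forces $s=0$ and hence $p_{n-1}=p_n$.
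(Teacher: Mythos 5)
Your argument is correct and is essentially the paper's proof: both force the unit-distance pairs to make $p_{n-1}$ and $p_n$ occupy one of the two apex positions over the regular simplex on the remaining points, and the same quantity $\sqrt{2(n-1)/(n-2)}>1$ (you via the circumradius of $\{p_1,\dots,p_{n-2}\}$, the paper via the simplex height) rules out the reflected pair, leaving the contradiction $p_{n-1}=p_n$. Your explicit equidistant-line computation and the Gram-matrix remark are just more detailed renderings of the paper's ``only two ways'' step.
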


\begin{proof}

Suppose that a point collection $P=(p_1,\ldots,p_n)$ induces the required relations. In particular, we require all distances among the points $P\setminus\{p_{n-1}\}$ to be equal, so these $n-1$ points must be the vertices of a regular simplex. Similarly, $P\setminus\{p_n\}$ must be the vertices of a regular simplex. Without loss of generality, these simplices have side length $1$.

There are only two ways in which these conditions may happen simultaneously. The first one is that $p_{n-1}=p_n$, but this contradicts the fact that $\norm{p_{n-1} - p_{n}} > 0$. The second one is that $p_{n-1}$ is the reflection of $p_n$ with respect to the hyperplane through $P\setminus\{p_{n-1},p_n\}.$ But in this case, a standard calculation for the height of the simplex implies that

$$\norm{p_{n-1}-p_n}=2\sqrt{\frac{n-1}{2(n-2)}}>1=\norm{p_1-p_2},$$

which contradicts the required relation $(n-1,n)\leq (1,2)$.
 
\end{proof}

The lower bound in the case of preorders can be obtained in other ways. For example Yugai \cite{yugai1998} proves that the maximal number of times a diameter can appear in a point set on $n$ points in $\mathbb{R}^{n-2}$ is $\binom{n-1}{2}+n-3<\binom{n}{2}$. Therefore, any preorder that imposes more than this number of diameters will be impossible to attain. For a deeper study on the maximal number of times a diameter of a point set can appear, see \cite{martini2005} and the references therein.

\section{Complete case, upper bounds}
\label{sec:upper}


We offer two proofs for our upper bounds. The first one relies on a powerful lemma by Schoenberg \cite{schoenberg1935} that characterizes families of real numbers that can appear as Euclidean distances induced by a point set. We refer the reader to the text by Matou\v{s}ek \cite{matousek2010} for a nice and short proof of the following result using linear algebra. We also sketch another proof that uses a result by Dekster and Wilker \cite{dekster1987}.

\begin{theorem}
\label{thm:distances}
Let $m_{ij}$, for $i,j$ in $[n+1]$ be nonnegative real numbers with $m_{ij}=m_{ji}$ for all $i,j$ and $m_{ii}=0$ for all $i$. Then there exist points $p_1,\ldots,p_{n+1}$ in $\mathbb{R}^n$ with $\norm{p_i-p_j}=m_{ij}$ for all $i,j$ if and only if the $n\times n$ matrix $G$ with entries $$g_{ij}=\frac{1}{2}(m_{(n+1)i}^2+m_{(n+1)j}^2-m_{ij}^2)$$

for $i,j \in [n]$ is positive semidefinite.
\end{theorem}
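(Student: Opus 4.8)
The plan is to prove both directions by reducing realizability to the recognition of $G$ as a Gram matrix, using the translation invariance of the Euclidean distance function to normalize the position of $p_{n+1}$.

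First I would note that translating a point collection does not change its pairwise distances, so if realizing points exist we may assume $p_{n+1}=0$. Writing $v_i=p_i$ for $i\in[n]$ and applying the polarization identity together with $m_{ii}=0$,
$$\langle v_i,v_j\rangle=\frac{1}{2}\left(\norm{v_i}^2+\norm{v_j}^2-\norm{v_i-v_j}^2\right)=\frac{1}{2}\left(m_{(n+1)i}^2+m_{(n+1)j}^2-m_{ij}^2\right)=g_{ij}.$$
Thus $G$ is the Gram matrix of $v_1,\ldots,v_n$, and for any $x\in\mathbb{R}^n$ one has $x^\top Gx=\norm{\sum_i x_iv_i}^2\ge 0$, so $G$ is positive semidefinite. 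This settles the ``only if'' direction.

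For the ``if'' direction, assume $G$ is positive semidefinite. I would invoke the standard fact that every $n\times n$ positive semidefinite matrix factors as $G=B^\top B$ with $B$ an $n\times n$ real matrix --- for instance via the spectral decomposition $G=Q\Lambda Q^\top$ with $\Lambda\ge 0$, taking $B=\Lambda^{1/2}Q^\top$, or via a Cholesky factorization. Letting $v_1,\ldots,v_n\in\mathbb{R}^n$ be the columns of $B$, we have $\langle v_i,v_j\rangle=g_{ij}$. Now set $p_i=v_i$ for $i\in[n]$ and $p_{n+1}=0$. Then $\norm{p_i-p_{n+1}}^2=g_{ii}=m_{(n+1)i}^2$, using $m_{ii}=0$, and $\norm{p_i-p_j}^2=g_{ii}+g_{jj}-2g_{ij}=m_{ij}^2$; symmetry $m_{ij}=m_{ji}$ and nonnegativity of the $m_{ij}$ are exactly what make these equalities consistent. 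This produces the desired configuration.

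There is no real obstacle here: the computations are routine and the single conceptual ingredient is the polarization identity that converts a distance constraint into an inner-product constraint, combined with the linear-algebra characterization of Gram matrices. The two points I would flag as worth stating carefully are the translation-invariance reduction, which is what makes the distinguished role of the index $n+1$ harmless, and the observation that an $n\times n$ positive semidefinite matrix has rank at most $n$, so the resulting points genuinely lie in $\mathbb{R}^n$ and not in a space of larger dimension.
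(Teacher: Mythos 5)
Your proof is correct and is essentially the standard linear-algebra argument (polarization identity plus the Gram-matrix factorization $G=B^\top B$) that the paper itself does not reproduce but delegates to Matou\v{s}ek's book. Both directions check out, including the computation $g_{ii}+g_{jj}-2g_{ij}=m_{ij}^2$ and the use of $m_{ii}=0$ to recover $\norm{p_i-p_{n+1}}=m_{(n+1)i}$, so there is nothing to add.
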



Note that Theorem \ref{thm:distances} does not guarantee that the points $p_1,\ldots,p_{n+1}$ are distinct. To illustrate our technique, we begin by proving the upper bound in the case of preorders.

\begin{prop}
\label{prop:upperpre}
Any total preorder $\leq$ on $D_n$ can be induced by a collection of $n$ points in $\mathbb{R}^{n-1}$.
\end{prop}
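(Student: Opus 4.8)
The plan is to use Theorem~\ref{thm:distances} as a black box: we want to produce squared distances $m_{ij}^2$ realizing the given preorder, such that the associated Gram matrix $G$ is positive semidefinite and has rank at most $n-1$ (so the points live in $\mathbb{R}^{n-1}$). First I would observe that a total preorder on $D_n$ amounts to a weakly increasing assignment of values to the pairs: there are some number $k$ of equivalence classes, and we may pick real numbers $0 < t_1 < t_2 < \cdots < t_k$ and set $m_{ij}^2 = t_{c(i,j)}$, where $c(i,j)$ is the index of the class of the pair $(i,j)$. Any such choice induces exactly the prescribed preorder, so the only freedom we need to exploit is the \emph{spacing} of the values $t_1,\dots,t_k$. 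The key point is that the ordering constraints are preserved under any strictly increasing reparametrization, so I have a lot of room to choose these numbers.

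The central idea is a perturbation/continuity argument anchored at a configuration we understand: the regular simplex. Concretely, set all $m_{ij}^2$ equal to a common value $a$; this is realized by a regular simplex on $n$ vertices in $\mathbb{R}^{n-1}$, and the corresponding Gram matrix $G_0$ (with $m_{n\,i}^2 = m_{n\,j}^2 = m_{ij}^2 = a$) is $\tfrac{a}{2}(I + J)$ on the index set $[n-1]$, which is positive \emph{definite}. Now I would take the desired values $t_1 < \cdots < t_k$ to be a tiny perturbation of this common value: choose $m_{ij}^2 = a + \varepsilon \, w_{ij}$ where the weights $w_{ij}$ are \emph{any} fixed real numbers that respect the prescribed preorder (i.e. $w$ is constant on classes and strictly increasing across them — e.g. take $w_{ij}$ to be the class index). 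Then $G = G_0 + \varepsilon\,G_1$ where $G_1$ is the fixed symmetric matrix built from the $w_{ij}$ via the same formula $g_{ij} = \tfrac12(w_{n\,i} + w_{n\,j} - w_{ij})$. Since $G_0$ is positive definite and the set of positive definite matrices is open, for all sufficiently small $\varepsilon > 0$ the matrix $G$ remains positive definite, hence positive semidefinite; by Theorem~\ref{thm:distances} it is the Gram matrix of $n$ points $p_1,\dots,p_n$ in $\mathbb{R}^{n-1}$. These points realize the prescribed preorder because $m_{ij}^2 = a + \varepsilon w_{ij}$ is a strictly increasing function of $w_{ij}$, hence of the class of $(i,j)$.

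The main obstacle to watch for is that Theorem~\ref{thm:distances} does not guarantee the points are distinct, and indeed for a preorder this is fine — repeated points are allowed when the preorder is not antisymmetric — but I should make sure the configuration is genuinely $n$ points with the correct distance order and not, say, something degenerate collapsing a class we wanted nontrivial. This is automatic here: whenever $(i,j)$ is not $\equiv$-related to $(i',j')$ we have $w_{ij}\neq w_{i'j'}$ hence strictly different distances, and whenever $(i,j)\not\equiv (i,j)$ is impossible, so the only collapses are the ones the preorder itself demands. One small bookkeeping point: Theorem~\ref{thm:distances} is phrased for $n+1$ points in $\mathbb{R}^n$, so to get $n$ points in $\mathbb{R}^{n-1}$ I apply it with its ``$n$'' equal to $n-1$; the Gram matrix is then $(n-1)\times(n-1)$, indexed by $[n-1]$, with the last point $p_n$ playing the role of the basepoint. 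I would also remark that strictly speaking we only need $G$ positive semidefinite, so even the cruder bound ``$G_0$ positive definite $\Rightarrow$ $G_0 + \varepsilon G_1 \succeq 0$ for small $\varepsilon$'' suffices; the openness of positive definiteness (equivalently, continuity of the smallest eigenvalue) is the only analytic input, and it is entirely standard.
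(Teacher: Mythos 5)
Your proposal is correct and follows essentially the same route as the paper: assign class-respecting values to the distances as a small perturbation of the all-equal (regular simplex) configuration, note that the limiting Gram matrix is positive definite, invoke openness of positive definiteness to fix a small $\varepsilon$, and apply Theorem~\ref{thm:distances} to realize the points in $\mathbb{R}^{n-1}$. The only cosmetic difference is that you perturb the squared distances linearly (so $G=G_0+\varepsilon G_1$ exactly), while the paper perturbs the distances themselves and argues by continuity; both yield the same conclusion.
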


\begin{proof}
 Let $\equiv$ be the equivalence relation induced on $D_n$ by $\leq$. Since $\leq$ induces a linear order on the equivalence classes, we may name them as follows: $$Q_1<Q_2<\ldots<Q_m.$$
 
 Let $\epsilon>0$ be a sufficiently small real number to be determined later.
 
 We define the following numbers:
 
 \[m_{ij}=\begin{cases}0 & \text{if $i=j$}\\ 1+k \epsilon & \text{if $i<j$ and $(i,j)\in Q_k$}\\
 1+k\epsilon & \text{if $i>j$ and $(j,i) \in Q_k$} \end{cases}\]
 
 From this definition, it is clear that $m_{ij}=m_{ji}$. Consider now the $(n-1)\times (n-1)$ matrix $G$ with entries $$g_{ij}=\frac{1}{2}(m_{ni}^2+m_{nj}^2-m_{ij}^2)$$

for $i,j \in [n]$. We claim that if $\epsilon$ is small enough, then $G$ is positive definite.

Indeed, the values $g_{ij}$ depend continuously on the values $m_{ij}$, and these in turn depend continuously on $\epsilon$. As $\epsilon\to 0$, we get that $$G \to \begin{pmatrix} 2 & 1 & 1 & \cdots & 1 \\
1 & 2 & 1 & \cdots & 1 \\
1 & 1 & 2 & \cdots & 1 \\
 & \vdots & & \ddots & \vdots\\
 1 & 1 & 1 & \cdots & 2\end{pmatrix}$$
 
 The matrix on the right hand side corresponds to the quadratic form $$(x_1,\ldots,x_n)\mapsto 2\left(\sum_{i=1}^n x_i^2 + \sum_{1\leq i<j\leq n} x_ix_j\right)=\left(\sum_{i=1}x_i\right)^2+\sum_{i=1}^n x_i^2,$$
 
 which is positive definite.
 
 The subset of $M_{n}(\mathbb{R})$ consisting of positive definite matrices is open. So we may set $\epsilon>0$ as a number such that $G$ is positive definite.  By Theorem \ref{thm:distances}, there are $p_1,\ldots,p_n$ in $\mathbb{R}^{n-1}$ such that $\norm{p_i-p_j}=m_{ij}$ for all $i,j$. Since distances between distinct points are non zero, it remains to prove that $P = (p_1, p_2, \dots, p_n)$ induces the given preorder $\leq$ on $D_n$.
 
 Indeed, if $(i,j)\leq (k,l)$, then there are indices $m_1\leq m_2$ such that $(i,j)\in Q_{m_1}$ and $(k,l)\in Q_{m_2}$, and then $$\norm{p_i-p_j}=1+m_1\epsilon \leq 1+m_2 \epsilon =\norm{p_k-p_l}.$$

If it is not the case that $(i,j)\leq (k,l)$, then $(k,l)<(i,j)$, so there are indices $m_2<m_1$ such that $(i,j)\in Q_{m_1}$ and $(k,l)\in Q_{m_2}$. Thus, we have

$$\norm{p_i-p_j}=1+m_1\epsilon > 1+m_2 \epsilon =\norm{p_k-p_l},$$

which shows that the we do not have $(i,j)\leq (k,l)$ in the induced relation.

Therefore, we recover exactly the relations given by $\leq$ with the order of pairwise distances of $P$.
 
\end{proof}

An alternative proof can be obtained with the following result of Dekster and Wilker \cite{dekster1987}. For each $n \in \mathbb{N}$, there exists $\lambda(n)$ such that for any collection of $\binom{n+1}{2}$ lengths satisfying $\lambda(n) \leq m_{ij} \leq 1; i, j = 1, \dots, n+1, i \neq j$ there exist points $p_1, \dots, p_{n+1}$ of an $n$-simplex in $\mathbb{R}^n$ such that $m_{ij} = |p_i - p_j|$. Thus, we can set the values of $m_{ij}$ in the interval $[\lambda(n), 1]$ satisfying the prescribed preorder conditions.

A careful adaptation of any of the previous arguments yields the desired result for linear orders. We show how to do this following the ideas in the first proof.

\begin{prop}
\label{prop:upperlinear}
Any linear order $\leq$ on $D_n$ can be induced by a collection of points in $\mathbb{R}^{n-2}$.
\end{prop}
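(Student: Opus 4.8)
The plan is to mimic the proof of Proposition \ref{prop:upperpre}, but to perturb the prescribed distances so that distinct pairs receive distinct values, thereby obtaining a genuine linear order while still living in one lower dimension. Recall that in the preorder proof we used $n$ points in $\mathbb{R}^{n-1}$; the matrix $G$ had size $(n-1)\times(n-1)$ and we only needed it to be positive definite. To drop to $\mathbb{R}^{n-2}$ we must instead arrange for $G$ to be positive \emph{semi}definite of rank exactly $n-2$, which by Theorem \ref{thm:distances} gives $n$ points spanning an $(n-2)$-dimensional affine subspace. The key idea: start from a degenerate ``base'' configuration realizing a convenient linear order in $\mathbb{R}^{n-2}$ whose Gram matrix $G_0$ is positive semidefinite of rank $n-2$, and then show that a sufficiently small, carefully chosen perturbation of the $m_{ij}$ keeps $G$ positive semidefinite of the same rank while rearranging the comparisons into any desired linear order.

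Concretely, here is how I would set it up. Enumerate the $N=\binom{n}{2}$ pairs according to the target linear order as $(i_1,j_1)<(i_2,j_2)<\cdots<(i_N,j_N)$. Choose distances of the form $m_{i_kj_k}=1+k\epsilon+(\text{tiny correction})$, where the correction term is what is needed to force the degenerate configuration. The natural base point for $\epsilon\to 0$ is the regular simplex: if all $m_{ij}=1$ then the $n$ points form a regular $(n-1)$-simplex in $\mathbb{R}^{n-1}$, which is one dimension too many. So instead I would take as the $\epsilon\to 0$ limit a configuration of $n$ points in $\mathbb{R}^{n-2}$ — for instance, $n-1$ of them forming a regular $(n-2)$-simplex and the $n$th placed at a chosen point (e.g.\ the centroid, or a reflection), so that the limiting Gram matrix $G_0$ (with respect to $p_n$ as origin) is positive semidefinite of rank exactly $n-2$. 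One then writes $G=G_0+\epsilon G_1+O(\epsilon^2)$ and must verify that the first-order term $G_1$ moves the single zero eigenvalue in the correct (nonnegative) direction — this is a standard eigenvalue-perturbation computation: the zero eigenvalue of $G_0$ has eigenvector $v_0$, and the condition is $v_0^\top G_1 v_0 > 0$ (or $=0$ with a compensating sign at second order).

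The main obstacle I anticipate is exactly this last point: ensuring that the perturbation direction is compatible with realizability in dimension $n-2$, i.e.\ that the lost eigenvalue stays $\geq 0$. Unlike the open condition ``positive definite'' used for preorders, ``positive semidefinite of rank $n-2$'' sits on the boundary of the PSD cone, so a generic perturbation will fail. The fix is to not demand an exact regular simplex for the base configuration, but to build in the freedom of the linear order: since we only need the prescribed \emph{order} of distances and not their exact values, we can choose the $O(\epsilon)$ and $O(\epsilon^2)$ coefficients of each $m_{ij}$ as free parameters subject only to the ordering constraints $m_{i_1j_1}<\cdots<m_{i_Nj_N}$, and use this freedom to steer $v_0^\top G v_0$ to be nonnegative. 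I would argue that the map sending the perturbation parameters to $v_0^\top G v_0$ has enough degrees of freedom, and that one can choose parameters respecting the strict ordering while landing on the PSD boundary; continuity and openness of the rank-$(n-2)$ PSD stratum (relative to its closure) then finish the argument. Once $G$ is shown to be PSD of rank $n-2$ with all off-diagonal $m_{ij}$ distinct, Theorem \ref{thm:distances} produces $n$ distinct points in $\mathbb{R}^{n-2}$ realizing the distances, hence the linear order, exactly as in the preceding proof; general position of the points follows since the pairwise distances are all distinct and determine a full-dimensional configuration.
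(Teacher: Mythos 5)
There is a genuine gap, and it sits exactly where you anticipated trouble. In your setup every prescribed distance has the form $1+k\epsilon+o(\epsilon)$, i.e.\ all $\binom{n}{2}$ distances lie in a shrinking interval around $1$. But then the $(n-1)\times(n-1)$ matrix of Theorem \ref{thm:distances} (taking $p_n$ as base point) converges, as $\epsilon\to 0$, to the positive definite matrix from the proof of Proposition \ref{prop:upperpre} (a positive multiple of $I+J$), so for all sufficiently small $\epsilon$ it is positive definite of full rank $n-1$, \emph{no matter how the lower-order corrections are chosen}. In other words, $n$ points whose pairwise distances all lie in a sufficiently narrow interval around $1$ necessarily span $\mathbb{R}^{n-1}$; there is no rank-$(n-2)$ PSD matrix $G_0$ in the closure of your parameter family, hence no zero eigenvalue and no eigenvector $v_0$ to steer. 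If instead you take a genuinely degenerate base such as a regular $(n-2)$-simplex plus its centroid (or a reflected vertex), the base distances occupy two well-separated scales, and a small perturbation of that base can only realize linear orders compatible with the separation (e.g.\ every pair involving the extra point below every other pair), not an arbitrary linear order. Finally, even where a suitable rank-$(n-2)$ base did exist, ``PSD of rank $n-2$'' is a boundary stratum of positive codimension; the claim that the ordering constraints leave enough freedom to land exactly on it is asserted rather than proved, and a first-order condition $v_0^{\top}G_1v_0>0$ would push you off the boundary into rank $n-1$ rather than keep you on it.

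The paper avoids all of this with one idea your proposal is missing: it does not prescribe the distance of the smallest pair at all. After relabelling so that $(n-1,n)$ is the minimum, it assigns $1+k\epsilon$ only to the remaining pairs and applies Theorem \ref{thm:distances} twice, to the index sets $[n-1]$ and $[n-2]\cup\{n\}$, where the relevant $(n-2)\times(n-2)$ matrices only need to be positive \emph{definite} --- an open condition, exactly as in the preorder case --- producing two $(n-1)$-point configurations in $\mathbb{R}^{n-2}$. These are glued along the common $(n-2)$-point subconfiguration (they have identical pairwise distances), with $p_{n-1}$ and $p_n=q_n$ placed on the same side of the hyperplane they span; as $\epsilon\to 0$ both apexes converge to the apex of a unit regular simplex, so for small $\epsilon$ they are distinct (since $m_{1(n-1)}\neq m_{1n}$) and $0<\norm{p_{n-1}-p_n}<1$, which automatically makes the unprescribed pair the unique minimum. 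If you want to salvage your approach, replacing ``perturb onto the PSD boundary'' by ``leave one distance free and determine it by gluing two positive definite configurations'' is precisely the missing step.
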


\begin{proof}

Let $\leq$ be a linear order on $D_n$, and let $N=\binom{n}{2}$. Then all the elements in $D_n$ can be listed as follows:

$$(i_1,j_1)<(i_2,j_2)<\ldots<(i_N,j_N).$$

Without loss of generality, we may assume that $(i_1,j_1)=(n-1,n)$.

Let $\epsilon>0$ be a sufficiently small real number to be determined later. For $(i,j)\neq (n-1,n)$ we define the numbers

 \[m_{ij}=\begin{cases}0 & \text{if $i=j$}\\ 1+k \epsilon & \text{if $i<j$ and $(i,j)=(i_k,j_k)$}\\
 1+k\epsilon & \text{if $i>j$ and $(j,i)=(i_k,j_k)$} \end{cases}\]
 
 Now we consider 
 two matrices simultaneously. One is the $(n-2)\times (n-2)$ matrix $G$ with entries 
 
 $$g_{ij}=\frac{1}{2}(m_{(n-1)i}^2+m_{(n-1)j}^2-m_{ij}^2)$$
 
 for $i,j\in[n-2]$. The other one is the $(n-2)\times(n-2)$ matrix $H$ with entries 
 
  $$h_{ij}=\frac{1}{2}(m_{ni}^2+m_{nj}^2-m_{ij}^2)$$
  
  for $i,j\in[n-2]$.
 
 As in the proof of Proposition \ref{prop:upperpre}, there is an $\epsilon_1>0$ such that if $\epsilon<\epsilon_1$, then $G$ is positive definite. Similarly, there is an $\epsilon_2>0$ such that if $\epsilon<\epsilon_2$, then $H$ is positive definite. So if $\epsilon<\min(\epsilon_1,\epsilon_2),$ by Theorem \ref{thm:distances} we can find point sets $p_1,\ldots,p_{n-1}$ and $q_1,\ldots,q_{n-2},q_{n}$ such that \begin{align*}
    \norm{p_i-p_j}&=m_{ij} \quad \text{for $i,j\in [n-1]$}\\
    \norm{q_i-q_j}&=m_{ij} \quad \text{for $i,j\in [n-2]\cup\{n\}$}
 \end{align*}
 
 The point collections $P'=(p_1,\ldots,p_{n-2})$ and $Q'=(q_1,\ldots,q_{n-2})$ have the same pairwise distances, so there is an isometry that takes one to the other. Thus, we may assume that $p_i=q_i$ for $i\in[n-2]$. Let $\pi$ be the hyperplane of $\mathbb{R}^{n-2}$ spanned by $\{p_1,\ldots,p_{n-2}\}$. After possibly a reflection of $q_n$ with respect to $\pi$, we may assume that $p_{n-1}$ and $q_n$ lie on the same halfspace defined by $\pi$.
 
 As $\epsilon \to 0$, $P'$ and $Q'$ converge to be the vertices of a unit regular $(d-2)$-dimensional simplex. Therefore, as $\epsilon \to 0$, we have $\norm{p_{n-1}-q_n}\to 0$. Choose $\epsilon_3$ so that $0 < \norm{p_{n-1}-q_n}<1$ if $0 < \epsilon<\epsilon_3$.
 
 Fix a value of $\epsilon$ such that $\epsilon<\min(\epsilon_1,\epsilon_2,\epsilon_3).$ We set $p_n:=q_n$. Note that $p_n\neq p_{n-1}$ as otherwise we would get the contradiction $$m_{1(n-1)}=\norm{p_1-p_{n-1}}=\norm{p_1-p_n}=m_{1n}.$$
 
 We claim that $P = (p_1, p_2, \dots, p_n)$ induces the linear order $\leq$ on $D_n$.
 
 Indeed, if we have $(i_k,j_k)<(i_l,j_l)$, then $k<l$. If $k\neq 1$ (i.e, we are not considering $(n-1,n)$) then
 
 $$\norm{p_{i_k}-p_{j_k}}=1+k\epsilon<1+l\epsilon = \norm{p_{i_l}-p_{j_l}},$$ so the relation given by $\leq$ is recovered.
 
If $k=1$, we have a relation of the form $(n-1,n)<(i_l,j_l)$, and we note that
 
 $$\norm{p_{n-1}-p_n}<1<1+l\epsilon=\norm{p_{i_l}-p_{j_l}},$$ and this means that the relation is recovered as well.

\end{proof}

\section{Bipartite case, lower bounds} 
\label{sec:BipartiteLower}

For each $n \geq 3$ we show a linear order in $B_{n,n}$ such that no pair of point collections $(P,Q)$ in $\mathbb{R}^{n-2}$ induces the respective linear order. This proves that the minimal dimension $d$ in which every linear order in $B_{r,s}$ is obtainable as pairs of distances from two collections $(P,Q)$ both in $\mathbb{R}^{d}$ satisfies $d > \min(r,s)-2$, since we can consider $n = \min(r,s)$ and a total order in $B_{r,s}$ such that contains the proposed total order in $B_{n,n}$.

We start with the bound for linear orders.

\begin{prop}
    There are no two collections of $n$ points each, in $\mathbb{R}^{n-2}$ that induce a linear order on $B_{n,n}$, including the following relation:
    \begin{itemize}
        \item For each $i \in [n]$, consider 
        \begin{equation}\label{prop:TotalOrderBipartite}
        (i, i) < (i+1, i) < (i+2, i) < \dots < (i-1, i)
        \end{equation}
        where subscripts are taken $mod \; n$.
    \end{itemize}
\end{prop}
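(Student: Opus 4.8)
The plan is to argue by contradiction, exploiting the same convexity machinery (Corollary \ref{coro:hyper}) that was used in the complete case. Suppose two collections $P=(p_1,\dots,p_n)$ and $Q=(q_1,\dots,q_n)$ in $\mathbb{R}^{n-2}$ induce the prescribed linear order on $B_{n,n}$. Fix an index $i\in[n]$. The chain \eqref{prop:TotalOrderBipartite} says that among the distances $\norm{p_k - q_i}$, the point $p_i$ is the closest to $q_i$, then $p_{i+1}$, and so on cyclically, with $p_{i-1}$ the farthest. In particular $q_i$ lies strictly closer to $p_i$ than to any other $p_k$; I would like to combine the $n$ relations "$q_i$ is closer to $p_i$ than to $p_{i-1}$'' (for all $i$) to force a degeneracy.

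First I would set up the perpendicular-bisector hyperplanes: for each $i$, let $\Pi_i$ be the perpendicular bisector of the segment $p_i p_{i-1}$ (indices mod $n$). The relation $(i,i) < (i-1,i)$, i.e. $\norm{p_i - q_i} < \norm{p_{i-1}-q_i}$, places $q_i$ strictly on the $p_i$-side of $\Pi_i$. Now the key geometric point is to show that the configuration of the $p_k$'s relative to these bisectors satisfies the hypotheses of Corollary \ref{coro:hyper} for an appropriate $(n-2)$-dimensional affinely independent subset. Because $P$ consists of $n$ points in $\mathbb{R}^{n-2}$, it is \emph{not} affinely independent — it has one affine dependency — so I would need to first extract, or rather work with, an affinely independent $(n-1)$-subset $\{p_k : k \in S\}$ spanning $\mathbb{R}^{n-2}$ (after checking general position is forced, or handling the degenerate subcase separately). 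The cyclic structure of the relations is what makes this delicate: I expect to use the chain \eqref{prop:TotalOrderBipartite} more fully, not just its last link, to control which side of each $\Pi_k$ the other points $p_j$ lie on, so that $H_k$ (the open halfspace of $\Pi_k$ containing $p_k$) excludes all $p_j$, $j \neq k$ in the relevant subset. Then Corollary \ref{coro:hyper} forces $\bigcap_k H_k$ to have closure inside the simplex spanned by that subset, and since each $q_i$ is trapped in such an intersection, the $q_i$'s are pinned down to an increasingly small region as the relations tighten.

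The contradiction I am aiming for is the same flavor as Proposition \ref{prop:lowerpre}: two of the $q_i$'s (or a $q_i$ and a $p_j$) are forced to coincide, violating that all points of $P$ are distinct from all points of $Q$, or violating that the induced relation is a genuine \emph{linear} order (which requires all the $2n$ relevant distances to be distinct). Concretely, I would show that for each $i$, the relations force $q_i$ to lie in the simplex spanned by $\{p_k : k \neq i-1\}$ (or a face thereof), and simultaneously, running the cyclic argument "the other way,'' that $q_{i-1}$ is forced into an overlapping tiny region; combining the $n$ cyclic constraints collapses everything to a single point, contradicting distinctness. The main obstacle will be handling the affine dependency among the $n$ points of $P$ in $\mathbb{R}^{n-2}$: Corollary \ref{coro:hyper} is stated for $d+1$ affinely independent points in $\mathbb{R}^d$, so I must either show that general position is actually forced by the linear-order hypothesis (so every $(n-1)$-subset is affinely independent and spans), or reduce to that case, and then carefully track the cyclic bisector conditions to verify the halfspace hypotheses of the corollary — this bookkeeping, rather than any deep new idea, is where the real work lies.
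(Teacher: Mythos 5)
Your proposal has a genuine gap: the convexity route via Corollary \ref{coro:hyper} cannot get off the ground, because the hypotheses of that corollary concern where the points $p_j$ sit relative to the bisector hyperplanes, and verifying this requires comparing distances \emph{within} the collection $P$. In the complete case this information was part of the prescribed order on $D_n$ (the relations forced $\norm{p_i-p_j}>\norm{p_j-p_{n-2}}$, which is exactly what placed each $p_j$ outside the halfspace $H_i$). In the bipartite setting the order lives on $B_{n,n}$, so it only compares distances of the form $\norm{p_k-q_i}$; it says nothing whatsoever about $\norm{p_j-p_k}$, and hence nothing about which side of the bisector of $p_kp_{k-1}$ another point $p_j$ lies on. The "bookkeeping" you defer is therefore not bookkeeping at all: the halfspace hypotheses you need simply do not follow from the prescribed relations. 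The affine-dependency problem you flag is also real (general position of $P$ is not forced by the hypotheses), and the final collapse "everything coincides, contradicting distinctness" is never made precise; note moreover that repeated points within $Q$ are not a priori excluded by the problem setup, only by the linearity of the order.

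The missing idea is much softer and avoids the $p$--$p$ distances entirely. For each $j$ let $H_j$ be the open halfspace, bounded by the perpendicular bisector of $p_jp_{j+1}$, that contains $p_j$. The chain \eqref{prop:TotalOrderBipartite} for index $i$ gives $\norm{p_j-q_i}<\norm{p_{j+1}-q_i}$ for every $j\neq i-1$, so $q_i\in\bigcap_{j\neq i-1}H_j$; thus any $n-1$ of the $n$ convex sets $H_1,\ldots,H_n$ have a common point. Since they live in $\mathbb{R}^{n-2}$, Helly's theorem yields a point $q\in\bigcap_{j\in[n]}H_j$, and then
\[
\norm{q-p_1}<\norm{q-p_2}<\cdots<\norm{q-p_n}<\norm{q-p_1},
\]
a contradiction. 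Your setup (the bisector halfspaces and the observation that $q_i$ is trapped in many of them) is the right first step, but the contradiction comes from Helly applied to these halfspaces, not from simplex containment à la Corollary \ref{coro:hyper}.
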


\begin{proof}
    We proceed by contradiction. Assume there exist point collections $P = (p_1, \dots, p_n), Q = (q_1, \dots, q_n)$ in $\mathbb{R}^{n-2}$ that induce the desired linear order in $B_{n,n}$. Let $H_i$ denote the halfspace that contains $p_i$ and is defined by the perpendicular bisector of the segment $p_ip_{i+1}$. 
    
    Note that the set of conditions \ref{prop:TotalOrderBipartite} imply that $q_i$ lies in $\cap_{j \neq i-1} H_j$, so the collection of halfspaces $\{H_1, \dots, H_n\}$ satisfies that any $n-1$ of them have non empty intersection. Thus, $\{H_1, \dots, H_n\}$ is a finite family of convex sets in $\mathbb{R}^{n-2}$ satisfying Helly's theorem hypothesis, therefore $\cap_{j \in [n]}H_j$ is non empty. Finally, let $q \in \cap_{j \in [n]}H_j$, by the definition of the halfspaces $H_i$, we have $\norm{q - p_i} < \norm{q - p_{i+1}}$ for each $i \in [n]$, thus $$\norm{q-p_1} < \norm{q-p_2} < \dots < \norm{q-p_n} < \norm{q-p_1},$$ implying $\norm{q-p_1} < \norm{q-p_1}$ which is a contradiction.
\end{proof}

Now we focus on the bound for total preorders. In order to do this, we first show how order conditions on the distances can impose affine independence requirements for the set of points.

\begin{lemma}\label{lm:LowerBoundBipAffine}
    Let $n, d$ be positive integers, and $P = (p_1, \dots, p_n), Q = (q_1, \dots, q_n)$ two collections of points in $\mathbb{R}^{d}$. Suppose the following relations are satisfied:
    \begin{itemize}
        \item For $i = 1,2$, $\norm{p_i- q_1} =  \norm{p_i - q_2} = \dots = \norm{p_i - q_n}$.
        \item For each $i \in \{3, \dots, n-1\}$, 
        $$\norm{p_i-q_1} = \norm{p_i-q_2} = \dots = \norm{p_i - q_{n+2-i}} < \norm{p_i - q_{n+3-i}} = \dots = \norm{p_i - q_n}$$
        \item $\norm{p_n - q_1} < \norm{p_n - q_2} < \dots < \norm{p_n - q_n}$.
        \item $\norm{p_1 - q_1} < \norm{p_2 - q_1}$.
    \end{itemize}
    
    Then the points $p_1, \dots, p_n$ are affinely independent.
\end{lemma}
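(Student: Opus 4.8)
The plan is to argue by contradiction. Suppose $p_1,\dots,p_n$ are affinely dependent; then there is a nonzero vector $\lambda=(\lambda_1,\dots,\lambda_n)$ with $\sum_{i=1}^n\lambda_i=0$ and $\sum_{i=1}^n\lambda_i p_i=0$. I will show that the prescribed order relations force $\lambda=0$, a contradiction.

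The first ingredient is the elementary identity obtained by expanding squared distances. Since $\norm{p_i-q_a}^2=\norm{p_i}^2-2\langle p_i,q_a\rangle+\norm{q_a}^2$, summing against $\lambda$ annihilates the cross term (because $\sum_i\lambda_i p_i=0$) and the constant term (because $\sum_i\lambda_i=0$), so that
$$\sum_{i=1}^n\lambda_i\norm{p_i-q_a}^2=\sum_{i=1}^n\lambda_i\norm{p_i}^2$$
is independent of $a$. Subtracting the identities for two consecutive indices $a$ and $a+1$, it follows that $\lambda$ is orthogonal to the vector $v^{(a)}:=\left(\norm{p_i-q_a}^2-\norm{p_i-q_{a+1}}^2\right)_{i=1}^{n}$ for every $a\in\{1,\dots,n-1\}$.

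The heart of the argument is to read off the support of each $v^{(a)}$ from the hypotheses. The first two coordinates of every $v^{(a)}$ vanish, since $p_1$ and $p_2$ are equidistant from all of $q_1,\dots,q_n$. For $3\le i\le n-1$, the hypothesis says $p_i$ realizes exactly two distances to the points $q_j$, with the jump occurring between $q_{n+2-i}$ and $q_{n+3-i}$; hence the $i$-th coordinate of $v^{(a)}$ is nonzero precisely when $a=n+2-i$. Finally, the chain $\norm{p_n-q_1}<\dots<\norm{p_n-q_n}$ makes the $n$-th coordinate of every $v^{(a)}$ nonzero. Consequently $v^{(1)}$ and $v^{(2)}$ are nonzero scalar multiples of $e_n$, while for $3\le a\le n-1$ the vector $v^{(a)}$ is a linear combination of $e_{n+2-a}$ and $e_n$ in which both coefficients are nonzero. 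Orthogonality of $\lambda$ to $v^{(1)}$ gives $\lambda_n=0$, and then each of $v^{(3)},v^{(4)},\dots,v^{(n-1)}$ in turn yields $\lambda_{n-1}=\lambda_{n-2}=\dots=\lambda_3=0$.

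It remains to pin down $\lambda_1$ and $\lambda_2$. Now $\sum_i\lambda_i=0$ forces $\lambda_2=-\lambda_1$, so $0=\sum_i\lambda_i p_i=\lambda_1(p_1-p_2)$. The last hypothesis $\norm{p_1-q_1}<\norm{p_2-q_1}$ guarantees $p_1\ne p_2$, hence $\lambda_1=0$, so $\lambda=0$, contradicting our choice of $\lambda$. Therefore $p_1,\dots,p_n$ are affinely independent. I expect the only delicate point to be the combinatorial bookkeeping in the support computation — correctly matching the block structure of the equidistance conditions to the vanishing pattern of the coordinates of $v^{(a)}$, and checking the boundary indices $a\in\{1,2\}$ together with the degenerate cases where the index range $\{3,\dots,n-1\}$ is empty (e.g. $n=3$). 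The rest is routine linear algebra.
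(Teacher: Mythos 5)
Your proof is correct. The key identity (an affine dependence $\sum_i\lambda_i p_i=0$, $\sum_i\lambda_i=0$ makes $\sum_i\lambda_i\norm{p_i-q_a}^2$ independent of $a$) is valid, and your support bookkeeping checks out: coordinates $1,2$ of every $v^{(a)}$ vanish, the $i$-th coordinate ($3\le i\le n-1$) is nonzero exactly for $a=n+2-i$, and the $n$-th coordinate is always nonzero, so $v^{(1)}$ kills $\lambda_n$ and $v^{(3)},\dots,v^{(n-1)}$ successively kill $\lambda_{n-1},\dots,\lambda_3$, after which $\lambda_1(p_1-p_2)=0$ and $p_1\neq p_2$ (from $\norm{p_1-q_1}<\norm{p_2-q_1}$) finish the job; the boundary cases $a\in\{1,2\}$ and the empty middle range for $n=3$ behave as you say. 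This is a genuinely different route from the paper's. The paper argues by induction on $k$ that $p_1,\dots,p_k$ are affinely independent \emph{and} that their affine hull is contained in the intersection of the perpendicular bisectors of $q_1,\dots,q_{n+2-k}$; the strict inequality in the hypothesis for $p_{k+1}$ then prevents $p_{k+1}$ from lying in that hull, and the last point $p_n$ is separated off by the bisector of $q_1,q_2$. Your version trades this geometric induction for a direct, non-inductive linear-algebra computation with an affine-dependence vector, which is shorter and makes the triangular structure of the hypotheses explicit; the paper's formulation yields as a by-product the containment of the affine hull of $p_1,\dots,p_{n-1}$ in the intersection of the bisectors of $q_1,q_2,q_3$, a fact it reuses in the following proposition (though that containment also follows immediately from the equidistance hypotheses, so nothing essential is lost by your approach).
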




\begin{proof}
     Note that the second condition implies $q_1, \dots, q_n$ are disctinct, while the third condition ensures $p_1$ different from $p_2$. Thus, $p_1, p_2$ are affinely independent and the line spanned by these two points lies in the subspace given by the intersection of the perpendicular bisectors of $q_1, \dots, q_n$. 
    \begin{figure}[h!]
    \definecolor{xdxdff}{rgb}{0.49019607843137253,0.49019607843137253,1}
    \definecolor{ududff}{rgb}{0.30196078431372547,0.30196078431372547,1}
    \begin{tikzpicture}[line cap=round,line join=round,>=triangle 45,x=1cm,y=1cm]
    \clip(-12.45,-0.45) rectangle (9.49,4.6);
    \draw [line width=0.7pt,dotted,domain=-20.05:9.49] plot(\x,{(-18.291-3.9*\x)/3.9});
    \draw [line width=1pt] (-6.27,2.94)-- (-7.49,0.06);
    \draw [line width=1pt] (-7.49,0.06)-- (-5.11,1.1);
    \draw [line width=1pt] (-5.11,1.1)-- (-6.27,2.94);
    \begin{scriptsize}
    \draw [fill=ududff] (-6.27,2.94) circle (1.5pt);
    \draw[color=ududff] (-6.05,3.10) node {$q_1$};
    \draw [fill=ududff] (-7.49,0.06) circle (1.5pt);
    \draw[color=ududff] (-7.77,0.05) node {$q_3$};
    \draw [fill=ududff] (-5.11,1.1) circle (1.5pt);
    \draw[color=ududff] (-4.80,1.07) node {$q_2$};
    \draw [fill=ududff] (-8.53,3.84) circle (1.5pt);
    \draw[color=ududff] (-8.31,4.1) node {$p_2$};
    \draw [fill=ududff] (-4.63,-0.06) circle (1.5pt);
    \draw[color=ududff] (-4.91,-0.21) node {$p_1$};
    \draw [fill=xdxdff] (-6.02,1.33) circle (0.9pt);
    \draw [fill=ududff] (-4.41,3.72) circle (1.5pt);
    \draw[color=ududff] (-4.19,3.89) node {$p_3$};
    \end{scriptsize}
    \end{tikzpicture}
    \caption{Case n = 3.}
    \end{figure}

    We claim that for each $k \in [n-1]$, the points $p_1, \dots, p_k$ are affinely independent and its spanned affine subspace lies in the subspace given by the intersection of the pairwise perpendicular biscetors of the points $q_1, \dots, q_{n+2-k}$. We proceed by induction on $k$. Our base case is given above, for $k = 2$. 
    
    Assume we have shown the case $k = m$. Now we prove both statements for $k = m+1$. The second one follows immediately, since the first condition gives us $\norm{p_{m+1}-q_{i}} = \norm{p_{m+1}-q_{j}}$ for all $i, j \in [n+1-m]$, so $p_{m+1}$ lies in the intersection of the perpendicular bisectors of points $q_1, \dots, q_{n+1-m}$. Therefore, the affine subspace spanned by $p_1, \dots, p_{m+1}$ lies in the intersection of the desired perpendicular bisectors. 
    
    Now, assuming $p_{m+1}$ lies in the affine subspace spanned by $p_1, \dots, p_{m}$ leads to a contradiction since it will lie in the intersection of all pairwise perpendicular bisectors of the points $q_1, \dots, q_{n+2 - m}$, implying $\norm{p_{m+1} - q_{n+2-m}} = \norm{p_{m+1} - q_{i}}$ for all $i \in [n+1-m]$, contradicting the second condition of our statement. Therefore, $p_{m+1}$ must be affinely independent from $p_1, \dots, p_m$, which concludes our induction.
    
    Thus, $p_1, \dots, p_k$ are affinely independent for each $k \in [n-1]$, and its spanned affine subspace lies in the intersection of the perpendicular bisectors of the points $q_1, \dots, q_{n+2-k}$.
    
    Finally, since $p_1, \dots, p_{n-1}$ are affinely independent and lie in the perpendicular bisector of $q_1, q_2$, the condition $\norm{p_n - q_1} < \norm{p_n - q_2}$ implies $p_1, \dots, p_n$ are affinely independent, as desired.
\end{proof}


We are now ready to provide the construction for our lower bound.

\begin{prop}
    There are no two collections of $n$ points each, in $\mathbb{R}^{n-1}$ such that they induce a total preorder on $B_{n,n}$, including the following relations:
    \begin{itemize}
        \item For $i = 1,2$, $(i, 1) \equiv (i, 2) \equiv \dots \equiv (i, n)$.
        \item For each $i \in \{3, \dots, n-1\}$, $$(i, 1) \equiv (i, 2) \equiv \dots \equiv (i, n+2-i) < (i, n+3-i) \equiv \dots (i,n).$$
        \item $(n, 1) < (n,2) < \dots < (n,n)$.
        \item $(1,1) < (2,1)$.
    \end{itemize}
    Where $(i,j) \equiv (r,s)$ means we have $(i,j) \leq (r,s)$ and $(r,s) \leq (i,j)$. And $(i,j) < (r,s)$ means $(i,j) \leq (r,s)$ without $(r,s) \leq (i,j)$.
\end{prop}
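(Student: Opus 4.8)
The plan is to derive a contradiction by combining Lemma~\ref{lm:LowerBoundBipAffine} with a counting/dimension argument. First I would observe that the prescribed total preorder on $B_{n,n}$ satisfies exactly the four families of strict/equivalence relations appearing in the hypotheses of Lemma~\ref{lm:LowerBoundBipAffine} (with the preorder relations $\equiv$ and $<$ translating directly into the distance equalities and strict inequalities demanded there). Hence, if two collections $P=(p_1,\dots,p_n)$ and $Q=(q_1,\dots,q_n)$ in $\mathbb{R}^{n-1}$ induced this preorder, the lemma would force $p_1,\dots,p_n$ to be affinely independent in $\mathbb{R}^{n-1}$; this is possible in principle ($n$ affinely independent points fit in $\mathbb{R}^{n-1}$), so affine independence of the $p_i$ alone is not yet the contradiction — it is the launching point.

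Next I would extract more geometric structure from the proof of the lemma itself: the induction there shows that for each $k\in[n-1]$ the affine span of $p_1,\dots,p_k$ lies in the intersection of the pairwise perpendicular bisectors of $q_1,\dots,q_{n+2-k}$. Taking $k=n-1$, the hyperplane (in $\mathbb{R}^{n-1}$ an $(n-2)$-flat, since $p_1,\dots,p_{n-1}$ are affinely independent) spanned by $p_1,\dots,p_{n-1}$ is contained in the perpendicular bisector of $q_1$ and $q_2$; and more strongly, going down the chain, the points $q_1,\dots,q_n$ are constrained to be equidistant from large affine subspaces. I expect the key additional ingredient to be that the relations also force $q_1,\dots,q_n$ to be affinely independent (this already appears inside the lemma's proof: the second condition makes the $q_i$ distinct, and iterating the perpendicular-bisector argument on the $Q$ side, using the symmetry of the hypotheses in the roles played here, should promote this to affine independence). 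Once both $P$ and $Q$ are affinely independent $n$-point sets in $\mathbb{R}^{n-1}$, each spans the whole ambient space, and then the containment ``$\mathrm{aff}(p_1,\dots,p_{n-1})\subseteq$ perpendicular bisector of $q_1,q_2$'' says an entire $(n-2)$-flat of $\mathbb{R}^{n-1}$ — a hyperplane — is the perpendicular bisector hyperplane of $q_1$ and $q_2$, i.e.\ it is \emph{determined}; running the chain for all $k$ pins down a nested family of flats and eventually forces a coincidence among the $q_i$ (or among the $p_i$), contradicting affine independence.

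Concretely, I would argue as follows. From $k=n-1$: $\mathrm{aff}(p_1,\dots,p_{n-1})$ equals the perpendicular bisector hyperplane $\Pi_{12}$ of $q_1,q_2$. From $k=n-2$: $\mathrm{aff}(p_1,\dots,p_{n-2})$ (an $(n-3)$-flat) lies in $\Pi_{12}\cap\Pi_{13}\cap\Pi_{23}$, where $\Pi_{rs}$ is the bisector of $q_r,q_s$. Since $p_n$ satisfies $\norm{p_n-q_1}<\norm{p_n-q_2}$, $p_n\notin\Pi_{12}$, consistent with $p_1,\dots,p_n$ spanning $\mathbb{R}^{n-1}$. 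Now play the same game on the $Q$ side — the hypotheses are deliberately arranged so that, reading the inequalities for fixed $j$ rather than fixed $i$, the points $q_1,\dots,q_n$ (in some order) satisfy an analogous cascade of bisector conditions with respect to the $p_i$, giving affine independence of the $q_i$ as well. With both point sets full-dimensional, $\Pi_{12}$ being an honest hyperplane forces $q_1\neq q_2$ to be mirror images across $\mathrm{aff}(p_1,\dots,p_{n-1})$; and $\mathrm{aff}(p_1,\dots,p_{n-1})$ already has codimension $1$, so there is no room left for the further containments coming from smaller $k$ unless some $q_i$ collapses onto the span of the others, the desired contradiction.

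The main obstacle I anticipate is making the ``symmetry'' step fully rigorous: the hypotheses of the lemma are \emph{not} literally symmetric in $P$ and $Q$ (rows indexed by $i$ have the truncation pattern, columns do not in the same shape), so I cannot simply invoke the lemma with roles swapped. Instead I would need either (a) to rederive, directly from the four bullet relations, that $q_1,\dots,q_n$ are affinely independent — most likely by showing $q_j$ for $j\ge 3$ is not in $\mathrm{aff}(q_1,\dots,q_{j-1})$ using the strict inequality $\norm{p_{n+2-j}-q_{j-1}}<\norm{p_{n+2-j}-q_j}$ together with the equalities already placing $q_1,\dots,q_{j-1}$ on a common perpendicular bisector of that $p$-point — or (b) to argue more cheaply that affine independence of $p_1,\dots,p_n$ in $\mathbb{R}^{n-1}$ is \emph{itself} impossible to combine with the remaining equidistance constraints, e.g.\ by a rank/degrees-of-freedom count showing the system of prescribed equalities has no solution with $p_1,\dots,p_n$ spanning the space. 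I would try route (a) first, since it mirrors the already-written induction and keeps the exposition uniform with Lemma~\ref{lm:LowerBoundBipAffine}.
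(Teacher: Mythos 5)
Your launching point is fine: the four bullet relations do translate verbatim into the hypotheses of Lemma~\ref{lm:LowerBoundBipAffine} with $d=n-1$, and you are right that affine independence of $p_1,\dots,p_n$ alone is not a contradiction in $\mathbb{R}^{n-1}$. But the route you then take has two genuine gaps. First, the ``same game on the $Q$ side'' is not available: the prescribed relations only compare distances within a fixed row $i$ (plus the single strict comparison $(1,1)<(2,1)$), and a total preorder merely \emph{including} these relations need not contain any cross-row equivalence; hence nothing forces any $q_j$ to be equidistant from two distinct points of $P$, and without such equalities there is no cascade of perpendicular-bisector conditions for the $q$'s at all. Your proposed repair (a) does not fix this: from $\norm{p_{n+3-j}-q_1}=\dots=\norm{p_{n+3-j}-q_{j-1}}<\norm{p_{n+3-j}-q_j}$ (note your index $n+2-j$ is off by one) you only learn that $q_j$ lies outside a sphere on which $q_1,\dots,q_{j-1}$ lie, which in no way prevents $q_j\in\mathrm{aff}(q_1,\dots,q_{j-1})$; so affine independence of the $q$'s is not established, and route (b) is only a gesture. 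Second, even granting everything you claim, your closing step is the assertion that ``there is no room left for the further containments,'' which is not an argument: with the containments you actually state ($\mathrm{aff}(p_1,\dots,p_{n-1})=\Pi_{12}$ from $k=n-1$, and only $\mathrm{aff}(p_1,\dots,p_{n-2})\subseteq\Pi_{12}\cap\Pi_{13}\cap\Pi_{23}$ from $k=n-2$) there is no inconsistency yet, since three \emph{non-collinear} points $q_1,q_2,q_3$ in $\mathbb{R}^{n-1}$ do admit an $(n-3)$-flat of equidistant points.

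The missing idea --- and the way the paper concludes --- is to use the inductive claim of Lemma~\ref{lm:LowerBoundBipAffine} at $k=n-1$ at full strength: the $(n-2)$-flat $\mathrm{aff}(p_1,\dots,p_{n-1})$ lies in \emph{all three} pairwise bisectors of $q_1,q_2,q_3$. Then $q_1-q_2$, $q_1-q_3$, $q_2-q_3$ are all orthogonal to an $(n-2)$-dimensional direction space, hence lie in a common line, so $q_1,q_2,q_3$ are collinear. But the $q_j$ are pairwise distinct (bullet three), and the pairwise bisectors of three distinct collinear points are parallel and pairwise distinct, hence have empty common intersection --- contradicting that the nonempty flat $\mathrm{aff}(p_1,\dots,p_{n-1})$ (e.g.\ the point $p_1$, which is equidistant from all the $q_j$) lies in it. This finishes the proof entirely on the $P$-side, using neither affine independence of the $q$'s nor even the point $p_n$; as written, your proposal does not reach a contradiction without this dimension-count step.
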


\begin{proof}
    Suppose there exists $P = (p_1, \dots, p_n), Q = (q_1, \dots, q_n)$ that induce the desired total preorder. These collections of points will satisfy all hypothesis stated in \ref{lm:LowerBoundBipAffine}. Therefore $p_1, \dots, p_{n-1}$ are affinely independent, and the affine subspace spanned by these points lie in the affine subspace given by the intersection of the perpendicular bisectors of points $q_1, q_2, q_3$. 
    
    Note that the affine subspace spanned by $p_1, \dots, p_{n-1}$ has dimension $n-2$, thus, by the dimension theorem we conclude that the affine subspace generated by $q_1, q_2, q_3$ has dimension at most $1$ since these subspaces are orthogonal to each other in $\mathbb{R}^{n-1}$. Equivalently,  $q_1, q_2, q_3$ qre collinear. Finally, the intersection of the three perpendicular bisectors defined by the points $q_1, q_2, q_3$ must be non empty, which happens only if at least two of them are equal, contradicting that $q_1, \dots, q_n$ are pairwise distinct. 
\end{proof}

\section{Bipartite case, upper bounds}
\label{sec:BipartiteUpper}

In this section we will adapt our arguments given in Section \ref{sec:upper} for the bipartite case. We start by showing every total preorder in $B_{n,m}$ can be attainable by two collections of points in $\mathbb{R}^{\min(n,m)}$.

\begin{prop}
    Any total preorder $\leq$ on $B_{n,m}$ can be induced by two point collections $P, Q$ of $n, m$ points in $\mathbb{R}^{\min(n,m)}$ respectively.
\end{prop}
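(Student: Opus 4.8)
The plan is to mimic the proof of Proposition \ref{prop:upperpre}, but now with two separate point collections and an asymmetric distance matrix. Assume without loss of generality that $\min(n,m) = m$, so we want both collections to live in $\mathbb{R}^m$. Let $\equiv$ be the equivalence relation induced by $\leq$ on $B_{n,m}$, and list the equivalence classes in increasing order as $Q_1 < Q_2 < \dots < Q_N$. For a small parameter $\epsilon > 0$ to be chosen later, assign to the pair $(i,j) \in B_{n,m}$ the target distance $m_{ij} = 1 + k\epsilon$ whenever $(i,j) \in Q_k$. The goal is to realize these as the cross-distances $\norm{p_i - q_j}$ of two point families; the intra-distances within $P$ and within $Q$ are left free, and we will choose them so that a realizing configuration exists in $\mathbb{R}^m$.

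The key tool is again Theorem \ref{thm:distances}, applied to the $(n+m)$-point system consisting of all the $p_i$'s and $q_j$'s. The natural choice is to make $Q = (q_1, \dots, q_m)$ a slightly perturbed regular simplex (so the $q_j$ are affinely independent and span $\mathbb{R}^{m-1}$), and to place all of $P$ near a single common point; concretely, as $\epsilon \to 0$ every cross distance tends to $1$, so in the limit each $p_i$ should be equidistant (distance $1$) from all vertices of a unit regular simplex on $m$ vertices — that is, each $p_i$ tends to the circumcenter of that simplex, a single point in $\mathbb{R}^m$. Fixing $m_{q_jq_{j'}} = 1$ for the within-$Q$ distances and some suitable within-$P$ values (for instance taking all $p_i$ to coincide in the limit, so $m_{p_ip_{i'}} \to 0$), I would form the Gram matrix $G$ from Theorem \ref{thm:distances} relative to one of the points as a base point. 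By continuity, $G$ converges as $\epsilon \to 0$ to an explicit limit matrix; the heart of the argument is to check that this limit matrix is positive definite of the correct rank, specifically that it has rank $m$, forcing the realizing configuration into $\mathbb{R}^m$ exactly. Openness of the positive-definite cone then gives a threshold $\epsilon_0$ so that for $0 < \epsilon < \epsilon_0$ the configuration exists in $\mathbb{R}^m$, with all cross-distances equal to the prescribed $m_{ij}$, and distinctness of each $p_i$ from each $q_j$ follows since all cross-distances are at least $1 > 0$.

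Finally, one verifies that $(P,Q)$ induces exactly $\leq$: if $(i,j) \leq (k,l)$ then $(i,j) \in Q_{k_1}$ and $(k,l) \in Q_{k_2}$ with $k_1 \leq k_2$, whence $\norm{p_i - q_j} = 1 + k_1\epsilon \leq 1 + k_2\epsilon = \norm{p_k - q_l}$; and if $(i,j) \not\leq (k,l)$ then $(k,l) < (i,j)$ and the strict inequality $1 + k_2\epsilon < 1 + k_1\epsilon$ shows $(i,j) \not\leq (k,l)$ is recovered in the induced relation. This is the same bookkeeping as in Proposition \ref{prop:upperpre} and is routine.

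The main obstacle I anticipate is the linear-algebra computation showing that the limiting Gram matrix is positive definite of rank exactly $m$ — i.e., that collapsing all of $P$ to the circumcenter of a regular $m$-simplex genuinely uses all $m$ dimensions and no more. One has to be careful about which point is chosen as the base point in Theorem \ref{thm:distances} (choosing a $q_j$ is cleanest, since the $q$'s span the space), and about the fact that the $p_i$'s, although coincident in the limit, must be made distinct for $\epsilon > 0$ without destroying positive-definiteness; this is handled exactly as in the complete case by a continuity/openness argument, but the explicit form of the limit matrix and the verification of its rank require the bulk of the work.
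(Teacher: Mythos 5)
Your global strategy has a genuine gap at its central step. You want to apply Theorem \ref{thm:distances} to the whole $(n+m)$-point system and conclude with ``openness of the positive-definite cone.'' But to place $n+m$ points in $\mathbb{R}^{\min(n,m)}$ you need the associated $(n+m-1)\times(n+m-1)$ matrix to be positive \emph{semi}definite of rank at most $\min(n,m)$, which is strictly smaller than $n+m-1$; such a matrix is never positive definite, it lies on the boundary of the PSD cone, and the set of PSD matrices of bounded rank is not open. So the continuity/openness argument that works in Proposition \ref{prop:upperpre} (where the limit matrix is genuinely positive definite) gives you nothing here: perturbing the entries by the $k\epsilon$ terms can either destroy positive semidefiniteness or push the rank above $\min(n,m)$, i.e.\ force a higher embedding dimension. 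You half-acknowledge this when you ask for a matrix that is ``positive definite of rank $m$,'' which is contradictory, and the issue is compounded by prescribing the within-$P$ distances (e.g.\ all tending to $0$): once the distances from each $p_i$ to $m$ affinely independent $q_j$'s are fixed and a side of their affine hull is chosen, the point $p_i$ is determined, so the within-$P$ distances are forced and cannot be assigned freely without risking infeasibility or a rank increase.

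The paper avoids exactly this trap by never forming one big Gram matrix. With $n=\min(n,m)$, it builds, for each $j\in[m]$ separately, a configuration of $n+1$ points in $\mathbb{R}^n$: a regular simplex of side $1+\epsilon$ playing the role of $P$, plus the single point $q_j$ at the prescribed distances $1+r\epsilon$. For $n+1$ points in $\mathbb{R}^n$ the limit matrix (as $\epsilon\to 0$, the unit regular simplex) \emph{is} positive definite, so the openness argument from Proposition \ref{prop:upperpre} applies verbatim. Since all these configurations contain congruent copies of the same regular simplex, isometries glue them onto one fixed simplex, and the mutual distances among the $q_j$'s are simply left unconstrained (the preorder only involves cross-distances). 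Your proposal could be repaired along the same lines --- fix the unit-side simplex on the small side and place each opposite point one at a time, either by the per-point Gram matrix or by a direct computation of the offset from the simplex's affine hull --- but as written, the single-matrix rank argument is the missing (and failing) step, not a routine verification.
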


\begin{proof}
    
    Assume $\min(n,m) = n$. We proceed in a similar fashion as in Proposition \ref{prop:upperpre}. For each $i \in [m]$ there exists $\epsilon_i$ small enough such that there exist points $p_1^{i}, \dots, p_n^{i}, q_i$ in $\mathbb{R}^{n}$ such that:
    $$\norm{p_j^{i} - p_k^i} = 1+\epsilon_i,$$
    $$\norm{p_j^i - q_i} = 1 + r\epsilon_i$$
    Where $r$ is the position of the class of $(j,i)$ in the prescribed total preorder.
    
    Note that $P^i = \{p_1^{i}, \dots, p_n^i\}$ form a regular simplex for each $i$, with side length $1 + \epsilon_i$. We can assume that the points $p_1^{i}, \dots, p_n^{i}, q_i$ were constructed using the same $\epsilon$ satisfying $0 < \epsilon < \min(\epsilon_1, \dots, \epsilon_m)$. Thus, the point collections $P^i$ have the same pairwise distances for $i = 1, \dots, m$, so there are isometries taking each one to the same simplex. Under these mappings, we obtain $P = (p_1, \dots, p_n)$, $Q= (q_1, \dots, q_m)$ in $\mathbb{R}^{n}$ that induce the desired preorder in $B_{n,m}$.


\end{proof}

Since any linear order is also a total preorder, we immediately have the following.

\begin{coro}
    Any linear order $<$ on $B_{n,m}$ can be induced by two collections $P, Q$ of $n, m$ points in $\mathbb{R}^{\min(n,m)}$ respectively.
\end{coro}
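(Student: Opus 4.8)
The plan is to deduce the corollary directly from the preceding proposition, since a linear order is nothing but an antisymmetric total preorder. Concretely, given a linear order $<$ on $B_{n,m}$, I would first regard it as a total preorder and invoke the previous proposition to obtain point collections $P=(p_1,\dots,p_n)$ and $Q=(q_1,\dots,q_m)$ in $\mathbb{R}^{\min(n,m)}$ whose pairwise distances induce $<$ as a total preorder.

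The only point worth spelling out is that the induced relation is in fact antisymmetric, i.e.\ a genuine linear order and not merely a preorder with the same (trivial) class structure. This is immediate from the construction in the proof of the proposition: the distance assigned to a pair $(j,i)\in B_{n,m}$ is $1+r\epsilon$, where $r$ is the rank of the equivalence class of $(j,i)$ in the prescribed preorder. When the prescribed relation is a linear order, every equivalence class is a singleton, so distinct pairs of $B_{n,m}$ receive distinct ranks $r$, hence distinct values $1+r\epsilon$ for $\epsilon>0$. Thus $\norm{p_{j_1}-q_{i_1}}=\norm{p_{j_2}-q_{i_2}}$ forces $(j_1,i_1)=(j_2,i_2)$, the induced preorder is antisymmetric, and it coincides with $<$.

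There is no real obstacle here; the entire content is carried by the previous proposition together with Theorem \ref{thm:distances}, which guarantees that the realizing points have \emph{exactly} the prescribed distances $m_{ij}$, so no spurious coincidences can arise from the geometric realization. If one preferred not to re-open the construction, one could argue abstractly instead: a total preorder induced by a point configuration is a linear order precisely when all the relevant distances are pairwise distinct, and the preorder we feed in already has singleton classes, so any realization of it as a total preorder automatically realizes it as a linear order in the required dimension $\min(n,m)$.
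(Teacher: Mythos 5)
Your proposal is correct and matches the paper's approach: the paper deduces the corollary immediately from the preceding proposition, since a linear order is a total preorder. Your extra verification that singleton classes yield pairwise distinct distances $1+r\epsilon$ (hence an antisymmetric induced relation) is a harmless elaboration of why that immediate deduction works.
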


\comentario

\section{Discussion}

We have shown that $d-2$ is the minimal dimension into which every total preorder on $D_n$ can be induced by the order of the pairwise distances of $n$. In the case of linear orders on $D_n$, this minimal dimension can be reduced to $n-3$.

For the bipartite variant, we completely solve the total preorder case, where we show that the minimal dimension into which every total preorder on $B_{m,n}$ can be induced by the order of the pairwise distances of two collections $P$ and $Q$ of $m$ and $n$ points respectively in $\mathbb{R}^{\min(m,n)}$. In the case of linear orders, we reduce the minimal dimension to only two possibilities: either $\min(m,n)$ or $\min(m,n)-1$.

The proof of Proposition \ref{prop:upperlinear} may give the impression that the hypothesis can be weakened to only require a total preorder with a unique minimal element, to which we will associate the distance $\norm{p_{n-1}-p_n}$. The proof will fail, as witnessed by the counterexample in the remark on diameters after the proof of Proposition \ref{prop:lowerpre}.

The main problem when replicating the argument is that there will be no guarantee that the constructed points are distinct.

Also, it is not possible to adapt the argument for Proposition \ref{prop:upperlinear} to the bipartite case directly in order to show that dimension $\min(n,m)-1$ suffices. The problem is that the naïve construction does not offer a way to compare the minimal distances obtained for constructions corresponding to different $q_i$'s. This leaves the following open problem.

\begin{prob}
For $n\geq 2$ an integer determine if the minimal dimension into which any linear order $B_{n,m}$ can be induced by the pairwise distances between two point collections in $\mathbb{R}^d$ is $\min(n,m)-1$ or $\min(n,m)$.
\end{prob}

Finding the minimal dimension into which every total preorder or linear order is achievable is only a first step. The following much more general problems can be studied.

\begin{prob}
For each $n$ and $d$, characterize the linear orders (resp. total preorders) on $D_n$ that can be induced from the ordering of pairwise distances of a point collection of size $n$ in $\mathbb{R}^d$.
\end{prob}

\begin{prob}
For $m,n,d$, characterize the linear orders (resp. total preorders) on $B_{m,n}$ that can be induced from the ordering of pairwise distances of point collections $P$ and $Q$ in $\mathbb{R}^d$ of sizes $m$ and $n$ respectively.
\end{prob}

We expect a full characterization to be out of reach of current tools in the area, as e.g. a full solution to the total preorders problem in the complete case would imply a solution to the maximum number of diameter pairs problem \cite{martini2005}. Nevertheless, any partial progress would shed additional light on the complex behaviour of pairwise distances of points in Euclidean space.



\section{Acknowledgements}

This work was supported by UNAM-PAPIIT IA104621. We thank an anonymous referee for  valuable comments, the reference to the paper of Dekster and Wilker, and the suggestion to study the bipartite case.


\begin{thebibliography}{1}

\bibitem{dekster1987}
B.V. Dekster and J.B. Wilker.
\newblock Edge lengths guaranteed to form a simplex.
\newblock {\em Arch. Math.}, 49:351--366, 1987.

\bibitem{erdos1946sets}
Paul Erd{\"o}s.
\newblock On sets of distances of $n$ points.
\newblock {\em The American Mathematical Monthly}, 53(5):248--250, 1946.


\bibitem{maldonado25}
Gerardo L. Madonado, Edgardo Roldan Pensado and Miguel Raggi.
\newblock Total orders realizable as the distances between two sets of points.
\newblock {\em Discrete Applied Mathematics}, 378:755 -- 761, 2026.
 

\bibitem{martini2005}
H.~Martini and V.~Soltan.
\newblock Antipodality properties of finite sets in {E}uclidean space.
\newblock {\em Discrete Mathematics}, 290(2):221 -- 228, 2005.

\bibitem{matousek2010}
Ji{\v{r}}{\'\i} Matou{\v{s}}ek.
\newblock {\em Thirty-three miniatures: Mathematical and Algorithmic
  applications of Linear Algebra}.
\newblock American Mathematical Society Providence, RI, 2010.

\bibitem{schoenberg1935}
I.J. Schoenberg.
\newblock Remarks to {M}aurice {F}réchet’s article '{S}ur la definition
  axiomatique d’une classe d’espace distances vectoriellement applicable
  sur l’espace de {H}ilbert'.
\newblock {\em Annals of Mathematics}, 36(3):724 -- 732, 1935.

\bibitem{yugai1998}
S.A. Yugai.
\newblock On the largest number of diameters of a point set in {E}uclidean
  space.
\newblock {\em Investigations in Topological and Generalized Spaces (Russian)},
  pages 84--87, 1988.

\end{thebibliography}

\bibliographystyle{plain}

\bigskip

\noindent
{\sc V\'ictor Hugo Almendra-Hern\'andez}
\smallskip

\noindent
{\em Facultad de Ciencias, Universidad Nacional Aut\'onoma de M\'exico, Ciudad de M\'exico, M\'exico}
\smallskip

\noindent
e-mail address: \texttt{vh.almendra.h@ciencias.unam.mx}

\bigskip

\noindent
{\sc Leonardo Mart\'inez-Sandoval}
\smallskip

\noindent
{\em Facultad de Ciencias, Universidad Nacional Aut\'onoma de M\'exico, Ciudad de M\'exico, M\'exico}
\smallskip

\noindent
e-mail address: \texttt{leomtz@ciencias.unam.mx}

\end{document}